\shorttitle}
\@nx\MakeUppercase{\the\toks@}}
\patchcmd\@settitle{\uppercasenonmath\@title}{\Large}{}{}
\authors}
\newtheorem{theorem}{Theorem}[section]
\newtheorem{definition}{Definition}[section]
\newtheorem{corollary}{Corollary}[section]
\newtheorem{proposition}{Proposition}[section]
\newtheorem{lemma}{Lemma}[section]
\newtheorem{remark}{Remark}[section]
\newtheorem{example}{Example}[section]
\numberwithin{equation}{section}
  \newcommand{\conv}{\mathrm{conv}}
  \newcommand{\R}{\mathfrak{Re}}
\begin{document}
\author[C. Conde and K. Feki] {\large{Cristian Conde}$^{1_{a,b}}$ and \large{Kais Feki}$^{2}$}

\address{$^{[1_a]}$ Instituto Argentino de Matem\'atica ``Alberto Calder\'on", Saavedra 15 3er. piso, (C1083ACA), Buenos Aires, Argentina}
\address{$^{[1_b]}$ Instituto de Ciencias, Universidad Nacional de Gral. Sarmiento, J. M. Gutierrez 1150, (B1613GSX) Los Polvorines, Argentina}
\email{\url{cconde@campus.ungs.edu.ar}}

\address{$^{[2]}$ Department of Mathematics, Faculty of Arts and Sciences, Najran University, Najran 66445,  Kingdom of Saudi Arabia}
\email{\url{kfeki@nu.edu.sa}}

\keywords{Approximate orthogonality, Birkhoff--James orthogonality, Positive operator, Semi-inner product, Numerical radius, operator norm.}
\subjclass[2020]{46C05, 47A05, 47A12, 47B65, 47L05.}
\date{\today}

\title[On approximate $A$-seminorm and $A$-numerical radius orthogonality]
{On approximate $A$-seminorm and $A$-numerical radius orthogonality of operators}
\maketitle

\begin{abstract}
This paper explores the concept of approximate Birkhoff-James orthogonality in the context of operators on semi-Hilbert spaces. These spaces are generated by positive semi-definite sesquilinear forms. We delve into the fundamental properties of this concept and provide several characterizations of it. Using innovative arguments, we extend a widely known result initially proposed by Magajna in [J. London. Math. Soc., 1993]. Additionally, we improve a recent result by Sen and Paul in [Math. Slovaca, 2023] regarding a characterization of approximate numerical radius orthogonality of two semi-Hilbert space operators, such that one of them is $A$-positive. Here, $A$ is assumed to be a positive semi-definite operator.
\end{abstract}

\section{Introduction and preliminaries} %
Let $\mathbb{B}(\mathcal{H})$ be the $C^{\ast}$-algebra of all bounded linear operators on a complex Hilbert space $\mathcal{H}$ with an inner product $\langle \cdot,\cdot \rangle$ and the corresponding norm $\|\cdot\| $. The symbol $I$ denotes the identity operator on $\mathcal{H}$. For any $T\in\mathbb{B}(\mathcal{H})$, the symbols $\mathcal{R}(T)$ and $\mathcal{N}(T)$ indicate the range and
the kernel of $T$, respectively. Throughout this article, the operator $A\in\mathbb{B}(\mathcal{H})$ is considered to be  non-zero and positive, i.e., $\langle Ax, x\rangle\geq 0$ for all $x\in\mathcal{H}$. Such an $A$ induces a positive semidefinite sesquilinear
form ${\langle \cdot, \cdot\rangle}_A: \,\mathcal{H}\times \mathcal{H} \rightarrow \mathbb{C}$
defined by
\begin{align*}
{\langle x, y\rangle}_A = \langle Ax, y\rangle, \qquad x, y\in\mathcal{H}.
\end{align*}
The symbol ${\|\cdot\|}_A$ signifies the seminorm induced by ${\langle \cdot, \cdot\rangle}_A$, i.e., ${\|x\|}_A = \sqrt{{\langle x, x\rangle}_A}$ for every $x\in\mathcal{H}$.  We denote the $A$-unit sphere of $\mathcal{H}$ by $\mathbb{S}_\mathcal{H}^A$, i.e.,
$$\mathbb{S}_\mathcal{H}^A:=\{x\in \mathcal{H}\,;\; \|x\|_A=1\}.$$
 Note that ${\|\cdot\|}_A$ is a norm on $\mathcal{H}$ if and only if the operator $A$ is injective. Further, it can be seen that
the semi-Hilbert space $(\mathcal{H}, {\|\cdot\|}_A)$ is complete if and only if $\mathcal{R}(A)$ is closed in $\mathcal{H}$.

For $x, y\in\mathcal{H}$, we say that $x$ and $y$ are $A$-orthogonal, denoted by $x \perp_A y$, if ${\langle x, y\rangle}_A = 0$.  The definition of $A$-orthogonality is a natural extension of
the usual notion of orthogonality, which in this context is the $I$-orthogonality.
\medskip

Given a positive semidefinite operator $A\in \mathbb{B}(\mathcal{H})$, we can replace the usual operator norm by the seminorm $\|\cdot\|_A$, where for any $T\in \mathbb{B}(\mathcal{H})$,
\[\|T\|_A=\sup\big\{\|Tx\|_A\,;\;~x\in \mathbb{S}_\mathcal{H}^A\big\}.\]
However, the quantity $\|\cdot\|_A$ is not sufficiently well-behaved. Specifically, it may happen that $\|T\|_A=+\infty$, for some $T\in \mathbb{B}(\mathcal{H})$. On the other hand, we do not have any obvious choice for an  adjoint operation defined by $A$. To conduct a meaningful study of orthogonality in $\mathbb{B}(\mathcal{H})$ with respect to the seminorm $\|\cdot\|_A$, we consider the following collection:
$$\mathbb{B}_{A^{1/2}}(\mathcal{H}) = \Big\{T\in \mathbb{B}(\mathcal{H})\, : \,\, \exists\,\, c>0\,\text{ such that }
\,\,{\|Tx\|}_{A} \le  c{\|x\|}_{A}, \,\, \forall x\in \mathcal{H}\Big\}.$$
An operator $T\in \mathbb{B}(\mathcal{H})$ is said to be $A$-bounded if $T\in\mathbb{B}_{A^{1/2}}(\mathcal{H})$. One can show that $\mathbb{B}_{A^{1/2}}(\mathcal{H})$ is  a unital subalgebra of $\mathbb{B}(\mathcal{H})$ which in general is neither closed nor dense in $\mathbb{B}(\mathcal{H})$ (see \cite{acg2,feki01}).
The subalgebra $\mathbb{B}_{A^{1/2}}(\mathcal{H})$ is equipped with the following seminorm:
\begin{align*}
{\|T\|}_A = \displaystyle{\sup_{x \in \overline{\mathcal{R}(A)}, x\neq \mathbf{0}}} \frac{{\|Tx\|}_A}{{\|x\|}_A}
= \inf\Big\{c>0\,; \; {\|Tx\|}_A \leq c{\|x\|}_A,\;\forall\, x\in \mathcal{H}\Big\}< \infty.
\end{align*}
For any $T\in \mathbb{B}_{A^{1/2}}$, the quantity $\|T\|_A$ can be equivalently expressed as:
\begin{align*}
{\|T\|}_A = \displaystyle\sup_{x\in \mathbb{S}_\mathcal{H}^A} {\|Tx\|}_A
= \sup\Big\{|{\langle Tx, y\rangle}_A|\,; \; x, y\in \mathbb{S}_\mathcal{H}^A\Big\}.
\end{align*}
For a comprehensive study on this classes of operators, the readers are referred to \cite{acg1,acg2,acg3,feki01}. Saddi in \cite{saddi} defined the so-called $A$-numerical radius of an operator $T\in\mathbb{B}(\mathcal{H})$ by
\begin{align*}
\omega_A(T)
&:= \sup \left\{|\langle Tx, x\rangle_A|\,;\;x\in \mathbb{S}_\mathcal{H}^A\right\}.
\end{align*}
 It is worth mentioning that $\omega_A(T)$ and ${\|T\|}_A$ can be $+ \infty$ for some $T\in\mathbb{B}(\mathcal{H})$ (see \cite{feki01}). However, ${\|\cdot\|}_A$ and $\omega_A(\cdot)$ define two equivalent seminorms on $\mathbb{B}_{A^{1/2}}(\mathcal{H})$. More precisely, for all $T\in \mathbb{B}_{A^{1/2}}(\mathcal{H})$, we have
\begin{equation}\label{refine1}
\frac{1}{2} \|T\|_A\leq\omega_A(T) \leq \|T\|_A,
\end{equation}
(see \cite{feki01} and references therein for more details). Recall from \cite{feki01} that an operator $T\in \mathbb{B}_{A^{1/2}}(\mathcal{H})$ is called $A$-normaloid if $r_A(T)=\|T\|_A$ where $r_A(T)$ means the $A$-spectral radius of $T$ (see \cite{feki01}). It is shown in \cite{feki01} that the equality $\omega_A(T)=\|T\|_A$ holds when $T$ is an $A$-normaloid operator. Furthermore, if $AT^2=0$, then $\frac{1}{2} \|T\|_A=\omega_A(T)$ (see \cite{feki01}). In recent years, the study of operators  on semi-Hilbert spaces received a considerable attention of many authors
(see, e.g., \cite{acg2,bakna2,bdd,gu,ssp,ssp2} and the references therein).
\medskip

There are several notions of orthogonality in $\mathbb{B}(\mathcal{H})$ (see, e.g., \cite{M.Z}). However, among the types, Birkhoff-James orthogonality is considered to be the most useful due to its diverse application in understanding the geometry of the operator spaces. Given $T, S\in \mathbb{B}(\mathcal{H})$, we say that $T$ is orthogonal to $S$, in the sense of Birkhoff-James, symbolized as $T\perp^B S$, if
\begin{align*}
\|T + \lambda S\|\geq \|T\|\quad \mbox{for all} \,\, \lambda \in \mathbb{C}.
\end{align*}
 In 1993, Magajna obtained the first characterization of the Birkhoff-James orthogonality (see \cite[Lemma 2.2]{Mag}). More precisely, given any $T, S\in \mathbb{B}(\mathcal{H})$, $T\perp^B S$ if and only if there exists
a sequence of unit vectors $\{x_n\}$ in $\mathcal{H}$ such that
\begin{align*}
\lim_{n\rightarrow\infty} \|Tx_n\| = \|T\| \quad
\mbox{and} \quad \lim_{n\rightarrow\infty}\langle Tx_n, Sx_n\rangle = 0.
\end{align*}
Therefore, whenever $\mathcal{H}$ is finite-dimensional,
$T\perp^B S$ if and only if there is a unit vector $x\in\mathcal{H}$ such that
$\|Tx\| = \|T\|$ and $\langle Tx, Sx\rangle = 0$.
Several years later, more precisely in 1999, Bhatia and \v{S}emrl \cite[Remark 3.1]{B.S} and Paul \cite[Lemma 2]{Pa} proved independently the same statement.

\medskip

 In recent times, the result due to Bhatia and \v{S}emrl has been extended in different contexts (see \cite{Ch.St.Wo, Wo.3, Wo.1}) employing different methods \cite{Wo.3, Wo.1}. This facilitates studying various aspects of orthogonality of
operators acting on Banach spaces and the settings of Hilbert $C^*$-module;
 see, for instance, \cite{A.R, B.C.M.W.Z, Ch.Wo, G.S.P, Ke, P.S.M.M, Wo.2}.
\medskip

The notion of $A$-Birkhoff-James orthogonality of operators has been recently given by Zamani in \cite{za} as follows.
\begin{definition}\label{de.31}
Let $T,S \in \mathbb{B}_{A^{1/2}}(\mathcal{H})$. The operator $T$ is called an $A$-Birkhoff--James orthogonal
to $S$, denoted by $T\perp^B_A S$, if
\begin{align*}
{\|T + \lambda S\|}_A\geq {\|T\|}_A \quad \forall\, \lambda \in \mathbb{C}.
\end{align*}
\end{definition}
 For $A=I$, the above definition reduces to the well-known notion of Birkhoff--James orthogonality of Hilbert space operators. Note that $A$-Birkhoff--James orthogonality is homogenous, i.e., $T\perp^B_A S\, \Leftrightarrow \, (\alpha T)\perp^B_A (\beta S)$, for every $\alpha, \beta\in\mathbb{C}$.
\medskip
Recently, Sen et al. introduced in \cite{ssp} the notion of approximate orthogonality  with respect to the seminorm $\|\cdot\|_A$ as follows.
\begin{definition}\label{appnorm}
Let $T,S \in \mathbb{B}_{A^{1/2}}(\mathcal{H})$ and $\varepsilon \in [0,1).$  Then $T$ is said to be $(\varepsilon,A)$-approximate orthogonal to $S \in \mathbb{B}_{A^{1/2}}(\mathcal{H}),$  written as $T\bot^B_{A,\varepsilon}S,$ if
$$\|T+\lambda S\|_A^2 \geq \|T\|_A^2 -2 \varepsilon \|T\|_A \|\lambda S\|_A,$$
for all $\lambda \in \mathbb{C}.$
 \end{definition}
From the definition of $(\varepsilon,A)$-approximate orthogonality, it is easy to see that such orthogonality notion is homogenous.  Below, we furnish a supportive example regarding this newly introduced concept of orthogonality.
\begin{example}\label{example1}
Let $\mathcal{H}=l^2(\mathbb{N})$ and we consider the following linear operators defined on $\mathcal{H}.$
\begin{enumerate}
			\item $A(x_1, x_2,\cdots, x_n, \cdots)=(x_1, x_2, 0, 0,\cdots, 0, 0, 0, \cdots),$
			\item $Te_1=\alpha e_1$ and $Te_j=\alpha_{j-1}e_j$, where $\{e_j\}_{j\in \mathbb{N}}$ is the canonical basis of $\mathcal{H}$ and $0<\alpha_1<\cdots<\alpha,$
			\item $S(x_1, x_2,\cdots, x_n, \cdots)=(\varepsilon x_1, x_2, x_3,\cdots, x_n, x_{n+1}, \cdots),$  where $\varepsilon \in [0, 1).$
\end{enumerate}
 Obviously, $A$ is a positive bounded operator on $\mathcal{H},$ $\|T\|_A=\alpha=\|Te_1\|_A$  and $\|S\|_A=1.$ \\
Then, for any $\lambda=|\lambda|e^{i\alpha}\in \mathbb{C}$, we have
	\begin{eqnarray}
	\|T+\lambda S\|_A^2&\geq &\|(T+\lambda S)e_1\|_A^2\nonumber \\
	&=&\|Te_1\|_A^2+2|\lambda|\R(e^{-i\alpha}\langle Te_1, Se_1 \rangle_A)+|\lambda|^2\|Se_1\|_A^2\nonumber \\
	&\geq& \|Te_1\|_A^2+2|\lambda|\R(e^{-i\alpha}\langle Te_1, Se_1 \rangle_A)\nonumber\\
		&=& \|Te_1\|_A^2+2|\lambda|\varepsilon \alpha \cos(\alpha)\nonumber\\
	&\geq& \|Te_1\|_A^2-2|\lambda| \varepsilon \alpha \nonumber\\
		&=& \|T\|_A^2-2\varepsilon \|T\|_A\|\lambda S\|_A. \nonumber\
	\end{eqnarray}
	So, by Definition \ref{appnorm}, we conclude that  $T\bot^B_{A,\varepsilon} S.$
\end{example}

\medskip

Let $T\in \mathbb{B}(\mathcal{H})$. An operator $R\in \mathbb{B}(\mathcal{H})$ is called an $A$-adjoint operator of $T$ if
the following equality ${\langle Tx, y\rangle}_A = {\langle x, Ry\rangle}_A$ holds for all $x, y\in \mathcal{H}$, that is, $AR = T^*A$. Hence, the existence of an $A$-adjoint of $T$ is equivalent to the existence of solutions in $\mathbb{B}(\mathcal{H})$ of the equation $AX=T^*A$. It can be observed that neither the existence nor the uniqueness of solutions of the above equation is ensured. However, the well-known theorem due to Douglas \cite{doug} guarantees that if $T\in \mathbb{B}(\mathcal{H})$ and satisfies the property $\mathcal{R}(T^*A) \subseteq \mathcal{R}(A),$ then the equation $AX = T^*A$ admits at least one solution in $\mathbb{B}(\mathcal{H})$. Again Douglas Theorem ensures that if the operator equation $AX = T^*A$ has solution, then the above equation admits a unique solution which verifies $\mathcal{R}(T^{\sharp_A})\subseteq \overline{\mathcal{R}(A)}$. This unique solution, denoted by $T^{\sharp_A}$, is said the Douglas solution or the reduced solution of $AX=T^*A$. Let $\mathbb{B}_A(\mathcal{H})$ stands for the collection of all operators which admit $A$-adjoint operators, i.e.,
$$ \mathbb{B}_A(\mathcal{H}) = \{ T \in \mathbb{B}(\mathcal{H}) \,;\, \mathcal{R}(T^*A) \subseteq \mathcal{R}(A) \}.$$
An operator $T\in \mathbb{B}(\mathcal{H})$ is called $A$-positive if the operator $AT$ is positive and we note $T\geq_A0$. It is not difficult to check that $A$-positive operators are in $\mathbb{B}_A(\mathcal{H})$. Another application Douglas theorem \cite{doug} shows that the set of all operators which admit $A^{1/2}$-adjoint operators coincides with $\mathbb{B}_{A^{1/2}}(\mathcal{H})$. We mention that $\mathbb{B}_{A}(\mathcal{H})$ and $\mathbb{B}_{A^{1/2}}(\mathcal{H})$ represent two subalgebras of $\mathbb{B}(\mathcal{H})$ which are, in general, neither closed nor dense in $\mathbb{B}(\mathcal{H})$. Further, we have $\mathbb{B}_{A}(\mathcal{H}) \subseteq \mathbb{B}_{A^{1/2}}(\mathcal{H}) \subseteq \mathbb{B}(\mathcal{H}) $. Generally, the above inclusions are proper with equality if $A$ is one-to-one and has a closed range (see  \cite{acg1, feki01}). Now, let $T\in\mathbb{B}_{A}(\mathcal{H})$. Then, $T^{\sharp_A}\in\mathbb{B}_{A}(\mathcal{H})$ and
$(T^{\sharp_A})^{\sharp_A} = P_ATP_A$. Here $P_A$ denotes the orthogonal projection onto $\overline{\mathcal{R}(A)}$, the closure of $\mathcal{R}(A)$. For an account of results, the  readers are referred to \cite{acg1,acg2,acg3,feki01}.

The article consists of three sections, including an introduction. In the following section, we introduce the concept of $A$-approximate orthogonality associated with the seminorm $\|\cdot\|_A$ and explore some basic properties. Furthermore, we present a comprehensive characterization of $A$-approximate orthogonality, extending a result previously established by different authors within the context of the usual norm in $\mathbb{B}(\mathcal{H})$. In Section \ref{s3}, we address the concept of approximate orthogonality induced by $A$-numerical radius, denoted as $\bot_{\omega_A}^\epsilon$, recently introduced in \cite{sen}. Essentially, we build upon a recent result by Sen and Paul in \cite{sen} concerning the characterization of $T \bot_{\omega_A}^\epsilon S$ under the conditions $T\geq_A 0$ and $S$ is $A$-bounded.

\section{Approximate $A$-Birkhoff-James Orthogonality of operators}\label{s2}

In the following proposition, we state some properties of the notion of approximate $A$-Birkhoff-James orthogonality of operators.
\begin{proposition}\label{properties}
Let $T, S \in \mathbb{B}_{A^{1/2}}(\mathcal{H})$ and $\varepsilon \in [0,1).$ Then the following conditions are equivalent
\begin{itemize}
  \item [(1)] $T\bot^B_{A,\varepsilon}S.$
  \item [(2)] $T\bot^B_{A,\varepsilon}P_AS.$
  \item [(3)] $P_AT\bot^B_{A,\varepsilon}P_AS.$
\end{itemize}
In addition, if $T, S\in \mathbb{B}_{A}(\mathcal{H})$ then any of the previous  conditions is equivalent to
\begin{itemize}
  \item [(4)] $T^{\sharp_A}\bot^B_{A,\varepsilon}S^{\sharp_A}.$
\end{itemize}
\end{proposition}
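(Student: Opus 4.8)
The plan is to reduce all four statements to the single defining inequality in Definition~\ref{appnorm} and to check that it is preserved \emph{term by term} under the three operations $R\mapsto P_AR$, $R\mapsto RP_A$ and $R\mapsto R^{\sharp_A}$. The engine of the whole argument is the identity
\[
\|Rx\|_A=\|P_ARP_Ax\|_A\qquad\text{for all }R\in\mathbb{B}_{A^{1/2}}(\mathcal{H}),\ x\in\mathcal{H}.
\]
To obtain it, write $x=P_Ax+x_0$ with $x_0\in\overline{\mathcal{R}(A)}^{\perp}=\mathcal{N}(A)$; $A$-boundedness of $R$ forces $\|Rx_0\|_A\le c\|x_0\|_A=0$, hence $ARx_0=0$, and since $\|y\|_A=\|P_Ay\|_A$ for every $y$ while $ARP_Ax\in\mathcal{R}(A)\perp\mathcal{N}(A)\ni Rx_0$, expanding $\|Rx\|_A^2=\langle A(RP_Ax+Rx_0),\,RP_Ax+Rx_0\rangle$ collapses to $\|P_ARP_Ax\|_A^2$. (This is essentially recorded in \cite{feki01}.) Taking the supremum over $\mathbb{S}_\mathcal{H}^A$ gives in particular $\|R\|_A=\|P_AR\|_A=\|RP_A\|_A=\|P_ARP_A\|_A$, and, more importantly, the function $\lambda\mapsto\|T+\lambda S\|_A$ depends on $T$ and $S$ only through $P_ATP_A$ and $P_ASP_A$.

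For $(1)\Leftrightarrow(2)\Leftrightarrow(3)$ I would simply observe that, since $P_A^2=P_A$, for each $\lambda\in\mathbb{C}$
\[
P_A(T+\lambda S)P_A=P_A(T+\lambda P_AS)P_A=P_A(P_AT+\lambda P_AS)P_A=P_ATP_A+\lambda P_ASP_A .
\]
By the identity above this yields $\|T+\lambda S\|_A=\|T+\lambda P_AS\|_A=\|P_AT+\lambda P_AS\|_A$, and likewise $\|T\|_A=\|P_AT\|_A$ and $\|\lambda S\|_A=\|\lambda P_AS\|_A$. Consequently the inequality $\|T+\lambda S\|_A^2\ge\|T\|_A^2-2\varepsilon\|T\|_A\|\lambda S\|_A$ and its analogues for $(2)$ and $(3)$ are, for every fixed $\lambda$, literally the same statement, so the three equivalences follow at once.

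For $(4)$, assume in addition $T,S\in\mathbb{B}_A(\mathcal{H})$. Two standard facts are used: first, the reduced solution is additive and conjugate-homogeneous, $(T+\lambda S)^{\sharp_A}=T^{\sharp_A}+\overline{\lambda}\,S^{\sharp_A}$, which is immediate from uniqueness of the Douglas solution since the right-hand side has range contained in $\overline{\mathcal{R}(A)}$ and satisfies $A(T^{\sharp_A}+\overline{\lambda}S^{\sharp_A})=(T+\lambda S)^{*}A$; second, $\|R^{\sharp_A}\|_A=\|R\|_A$ for every $R\in\mathbb{B}_A(\mathcal{H})$ (see \cite{feki01}). Substituting $\mu=\overline{\lambda}$ then gives $\|T^{\sharp_A}+\mu S^{\sharp_A}\|_A=\|(T+\lambda S)^{\sharp_A}\|_A=\|T+\lambda S\|_A$, $\|T^{\sharp_A}\|_A=\|T\|_A$ and $\|\mu S^{\sharp_A}\|_A=\|\lambda S\|_A$; since $\mu$ runs over all of $\mathbb{C}$ exactly when $\lambda$ does, the inequality defining $T^{\sharp_A}\bot^B_{A,\varepsilon}S^{\sharp_A}$ is equivalent, quantifier and all, to the one defining $T\bot^B_{A,\varepsilon}S$. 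Hence $(4)\Leftrightarrow(1)$.

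I do not expect a genuinely hard step. The only real subtlety — and the one place the hypothesis $R\in\mathbb{B}_{A^{1/2}}(\mathcal{H})$ is actually used — is the identity $\|Rx\|_A=\|RP_Ax\|_A$, where $A$-boundedness is precisely what allows discarding the $\mathcal{N}(A)$-component of $x$. Everything else is careful bookkeeping forced by $\|\cdot\|_A$ being a seminorm and not a norm, so that the defining inequalities must be matched verbatim rather than manipulated by cancellation.
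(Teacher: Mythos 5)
Your proof is correct, and the overall strategy coincides with the paper's: reduce each equivalence to the invariance of the three quantities $\|T+\lambda S\|_A$, $\|T\|_A$, $\|\lambda S\|_A$ under the relevant operation, so that the defining inequality of Definition~\ref{appnorm} is literally unchanged. The difference is in how the invariance is established. The paper works with the duality formula $\|R\|_A=\sup\{|\langle Rx,y\rangle_A|:x,y\in\mathbb{S}_\mathcal{H}^A\}$ and moves $P_A$ across the pairing using $P_AAy=Ay$, which handles left multiplication by $P_A$ (all that is needed for (1)--(3)). You instead prove the pointwise identity $\|Rx\|_A=\|P_ARP_Ax\|_A$ via the orthogonal decomposition $x=P_Ax+x_0$ with $x_0\in\mathcal{N}(A)$, using $A$-boundedness to kill the $\mathcal{N}(A)$-component; this is marginally stronger (it also gives invariance under right multiplication by $P_A$, which is what the subsequent corollary in the paper needs) at the cost of one extra observation. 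For (4) your argument is actually more complete than the paper's, which only cites $\|R^{\sharp_A}\|_A=\|R\|_A$: you correctly supply the missing ingredient $(T+\lambda S)^{\sharp_A}=T^{\sharp_A}+\overline{\lambda}\,S^{\sharp_A}$ (justified by uniqueness of the Douglas reduced solution) and the change of variable $\mu=\overline{\lambda}$, which is exactly what is needed to match the quantifier over $\lambda$. No gaps.
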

\begin{proof}
 Let $\lambda \in \mathbb{C}$, then
	\begin{align*}
&\|P_AT+\lambda P_AS\|_A\nonumber\\
&=\sup\Big\{|{\langle (P_AT+\lambda P_AS)x, y\rangle}_A|\,; \; x, y\in \mathcal{H}, {\|x\|}_A = {\|y\|}_A = 1\Big\}\nonumber \\
	&=\sup\Big\{|{\langle AP_ATx, y\rangle}+{\lambda\langle  AP_ASx, y\rangle}|\,; \; x, y\in \mathcal{H}, {\|x\|}_A = {\|y\|}_A = 1\Big\}\nonumber \\
	&=\sup\Big\{|{\langle Tx, P_AAy\rangle}+{\lambda\langle  Sx, P_A Ay\rangle}|\,; \; x, y\in \mathcal{H}, {\|x\|}_A = {\|y\|}_A = 1\Big\}\nonumber \\
	&=\sup\Big\{|{\langle Tx, Ay\rangle}+{\lambda\langle  Sx,  Ay\rangle}|\,; \; x, y\in \mathcal{H}, {\|x\|}_A = {\|y\|}_A = 1\Big\}\nonumber \\
	&=\sup\Big\{|{\langle ATx, y\rangle}+{\lambda\langle  ASx,  y\rangle}|\,; \; x, y\in \mathcal{H}, {\|x\|}_A = {\|y\|}_A = 1\Big\}\nonumber \\
	&=\sup\Big\{|{\langle A(T+\lambda S)x, y\rangle}|\,; \; x, y\in \mathcal{H}, {\|x\|}_A = {\|y\|}_A = 1\Big\}\nonumber \\
	&=\|T+\lambda S\|_A,
	\end{align*}
	in a similar way we obtain that
	\begin{align}\label{eq3}
	\|T+\lambda P_AS\|_A=\|T+\lambda S\|_A
	\end{align}
	and
\begin{align}\label{eq2}
\|P_AS\|_A
&=\sup\Big\{|{\langle P_ASx, y\rangle}_A|\,; \; x, y\in \mathcal{H}, {\|x\|}_A = {\|y\|}_A = 1\Big\}\nonumber \\
&=\sup\Big\{|{\langle AP_ASx, y\rangle}|\,; \; x, y\in \mathcal{H}, {\|x\|}_A = {\|y\|}_A = 1\Big\}\nonumber \\
&=\sup\Big\{|{\langle Sx, P_A Ay\rangle}|\,; \; x, y\in \mathcal{H}, {\|x\|}_A = {\|y\|}_A = 1\Big\}\nonumber \\
&=\sup\Big\{|{\langle Sx,  Ay\rangle}|\,; \; x, y\in \mathcal{H}, {\|x\|}_A = {\|y\|}_A = 1\Big\}=\|S\|_A. \
\end{align}
\noindent$(1) \Leftrightarrow (2):$  Follows directly from Definition \ref{appnorm} and the equalities \eqref{eq3} and \eqref{eq2}.\\
\noindent$(1) \Leftrightarrow (3):$  Follows directly from Definition \ref{appnorm} and the equalities \eqref{eq3} and \eqref{eq2}.\\
\noindent$(1) \Leftrightarrow (4):$
 Since $\|R\|_A=\|R^{\sharp_A}\|_A$ for any $R\in \mathbb{B}_{A}(\mathcal{H})$, this equivalence follows easily from some well-known properties of $T^{\sharp_A}$, recall that $\|R\|_A=\|R^{\sharp_A}\|_A$ for any $R\in \mathbb{B}_{A}(\mathcal{H}).$
\end{proof}

\begin{corollary}
Let $T, S\in \mathbb{B}_{A}(\mathcal{H})$ such that $T^{\sharp_A}\bot^B_{A,\varepsilon}S^{\sharp_A},$ then $TP_A\bot^B_{A,\varepsilon}SP_A.$
\end{corollary}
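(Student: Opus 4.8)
The plan is to reduce the corollary to part (4) of Proposition~\ref{properties} by exploiting the relationship between $R^{\sharp_A}$ and $RP_A$, together with the homogeneity-type and projection-invariance statements already proved. Recall from the preliminaries that for any $R\in\mathbb{B}_A(\mathcal{H})$ one has $(R^{\sharp_A})^{\sharp_A}=P_ARP_A$, and that $\|R\|_A=\|R^{\sharp_A}\|_A$; moreover $T^{\sharp_A},S^{\sharp_A}\in\mathbb{B}_A(\mathcal{H})$ whenever $T,S\in\mathbb{B}_A(\mathcal{H})$. So starting from $T^{\sharp_A}\bot^B_{A,\varepsilon}S^{\sharp_A}$, I would apply the equivalence $(1)\Leftrightarrow(4)$ of Proposition~\ref{properties} to the pair $(T^{\sharp_A},S^{\sharp_A})$, which yields $(T^{\sharp_A})^{\sharp_A}\bot^B_{A,\varepsilon}(S^{\sharp_A})^{\sharp_A}$, i.e.
\[
P_ATP_A\,\bot^B_{A,\varepsilon}\,P_ASP_A.
\]

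Next I would strip off the outer $P_A$'s. The computation in the proof of Proposition~\ref{properties} shows that left-multiplication by $P_A$ does not change any of the relevant seminorms: for all $\lambda\in\mathbb{C}$, $\|P_AT+\lambda P_AS\|_A=\|T+\lambda S\|_A$ with $T,S$ replaced by $TP_A,SP_A$ respectively, and likewise $\|P_A(SP_A)\|_A=\|SP_A\|_A$. Hence the defining inequality of $(\varepsilon,A)$-approximate orthogonality for the pair $(P_A(TP_A),P_A(SP_A))$ is literally the same inequality as for the pair $(TP_A,SP_A)$, so
\[
P_ATP_A\,\bot^B_{A,\varepsilon}\,P_ASP_A \iff TP_A\,\bot^B_{A,\varepsilon}\,SP_A.
\]
Combining the two displayed steps gives $TP_A\bot^B_{A,\varepsilon}SP_A$, which is exactly the claim.

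The only real point to be careful about is that Proposition~\ref{properties} as stated removes $P_A$ on the \emph{left} (conditions (2) and (3) involve $P_AS$, $P_AT$), whereas here the leftover projections after applying $\sharp_A$ twice sit on the \emph{right} as $TP_A$ and $SP_A$. This is why one cannot simply quote $(1)\Leftrightarrow(3)$ directly; instead one observes that $P_A\cdot(TP_A)=P_ATP_A$ and reuses the left-multiplication invariance of $\|\cdot\|_A$ established inside the proof of Proposition~\ref{properties}. An alternative, essentially equivalent, route is to note that $\|(TP_A)+\lambda(SP_A)\|_A=\|P_A(T+\lambda S)P_A\|_A$ and that for any $R\in\mathbb{B}_A(\mathcal{H})$ one has $\|P_ARP_A\|_A=\|R^{\sharp_A}\|_A=\|R\|_A$ together with the corresponding identity for $\lambda S$, reducing everything again to the inequality for $(T,S)$ themselves. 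Either way, the proof is a short chain of substitutions; I expect no genuine obstacle, only the bookkeeping of which side the projections land on after the double adjoint.
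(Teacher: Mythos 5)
Your proposal is correct and follows essentially the same route as the paper: apply the equivalence $(1)\Leftrightarrow(4)$ of Proposition~\ref{properties} to the pair $(T^{\sharp_A},S^{\sharp_A})$ to obtain $P_ATP_A\bot^B_{A,\varepsilon}P_ASP_A$, then remove the left projections via the equivalence $(1)\Leftrightarrow(3)$ applied to the pair $(TP_A,SP_A)$, using $P_A(TP_A)=P_ATP_A$. Your caveat about the projections sitting on the right is resolved exactly as the paper does it, so there is no gap.
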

\begin{proof}
 Since $T^{\sharp_A}\bot^B_{A,\varepsilon}S^{\sharp_A}$, it follows from Proposition \ref{properties} that $(T^{\sharp_A})^{\sharp_A}\bot^B_{A,\varepsilon}(S^{\sharp_A})^{\sharp_A}.$ Thus, $P_ATP_A \bot^B_{A,\varepsilon} P_ASP_A.$ However, since $TP_A, SP_A\in \mathbb{B}_{A}(\mathcal{H})\subseteq \mathbb{B}_{A^{1/2}}(\mathcal{H})$, by Proposition \ref{properties}, we get $TP_A\bot^B_{A,\varepsilon}SP_A.$
\end{proof}

\begin{proposition} \label{linearlyind}
Let $T, S\in \mathbb{B}_{A^{1/2}}(\mathcal{H})$  such that $T\bot^B_{A,\varepsilon}S,$ for some $\varepsilon \in [0,1)$. Then $T, S$ are linearly independent.
\end{proposition}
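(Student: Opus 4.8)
The plan is to argue by contraposition: assume $T$ and $S$ are linearly dependent and show that $T \not\perp^B_{A,\varepsilon} S$. If $T$ and $S$ are linearly dependent, then either $T = 0$, or $S = \mu T$ for some scalar $\mu \in \mathbb{C}$. The case $T = 0$ is degenerate and should be handled first; one needs to check whether the definition as stated allows $T = 0$, and in fact if $\|T\|_A = 0$ the inequality $\|T + \lambda S\|_A^2 \geq \|T\|_A^2 - 2\varepsilon\|T\|_A\|\lambda S\|_A = 0$ holds trivially, so $T = 0$ would satisfy $T \perp^B_{A,\varepsilon} S$ for every $S$. Hence the correct reading of the statement must be that $T \neq 0$ is implicit (or $\|T\|_A \neq 0$), and the genuine content is: if $S = \mu T$ with $\mu \neq 0$ and $T \perp^B_{A,\varepsilon} S$, we derive a contradiction. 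I would note this at the outset.

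Next, assume $S = \mu T$ with $\mu \neq 0$ and also $\|T\|_A > 0$ (the relevant nondegenerate situation). Substituting into the defining inequality of Definition \ref{appnorm}, for every $\lambda \in \mathbb{C}$ we would need
\[
\|T + \lambda \mu T\|_A^2 \geq \|T\|_A^2 - 2\varepsilon \|T\|_A \|\lambda \mu T\|_A,
\]
that is, $|1 + \lambda\mu|^2 \|T\|_A^2 \geq \|T\|_A^2 - 2\varepsilon |\lambda\mu|\,\|T\|_A^2$. Dividing by $\|T\|_A^2 > 0$, this reads $|1 + \lambda\mu|^2 \geq 1 - 2\varepsilon|\lambda\mu|$ for all $\lambda \in \mathbb{C}$. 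Since $\mu \neq 0$, as $\lambda$ ranges over $\mathbb{C}$ the quantity $\lambda\mu$ also ranges over all of $\mathbb{C}$; writing $z = \lambda\mu$, the requirement becomes $|1 + z|^2 \geq 1 - 2\varepsilon|z|$ for all $z \in \mathbb{C}$.

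Finally, I would pick a suitable $z$ to violate this. The natural choice is $z = -t$ with $t > 0$ small, which gives $|1 - t|^2 = (1-t)^2 = 1 - 2t + t^2$ on the left and $1 - 2\varepsilon t$ on the right; the inequality would force $1 - 2t + t^2 \geq 1 - 2\varepsilon t$, i.e. $t^2 \geq 2(1-\varepsilon)t$, i.e. $t \geq 2(1-\varepsilon)$. Since $\varepsilon \in [0,1)$ we have $2(1-\varepsilon) > 0$, so choosing any $t$ with $0 < t < 2(1-\varepsilon)$ yields a contradiction. This shows $T \not\perp^B_{A,\varepsilon} S$, completing the contrapositive. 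The main point to be careful about — really the only subtlety — is the degenerate case $\|T\|_A = 0$ and, correspondingly, making sure the statement is understood with $T$ nonzero in the seminorm; the rest is an elementary one-variable estimate with the explicit witness $\lambda = -t/\mu$.
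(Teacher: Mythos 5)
Your proof is correct and follows essentially the same route as the paper: substitute the dependence relation into the defining inequality, cancel the (nonzero) seminorm, and exhibit a small negative test scalar that violates the resulting scalar inequality. Your single witness $0<t<2(1-\varepsilon)$ handles $\varepsilon=0$ and $\varepsilon>0$ uniformly, whereas the paper splits into two cases and passes to the limit along $\lambda_n=-\alpha/2^n$; your caveat about the degenerate situation $\|T\|_A=0$ (and likewise $S=0$) is well taken, since the paper's own proof silently assumes $\|S\|_A\neq 0$, which is not automatic for a seminorm.
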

\begin{proof}
Suppose that $T$ and $S$ are not linearly independent, this means that there exists $\alpha \in \mathbb{C}, \alpha\neq 0$ such that $T=\alpha S.$
	
	We divide the proof into two main cases, namely $\varepsilon=0$ and $\varepsilon >0$.
	
	Case 1: $\varepsilon=0.$
	
 It follows from the Definition of \ref{appnorm} that $\|T+\lambda S\|_A^2 \geq \|T\|_A^2$ for any scalar $\lambda\in \mathbb{C}$. Consequently,
	\begin{align*}
	\|(\alpha+\lambda)S\|_A^2=|\alpha+\lambda|^2\|S\|_A^2=\|T+\lambda S\|_A^2\geq \|T\|_A^2=|\alpha|^2\|S\|_A^2.
	\end{align*}
Specifically, for $\lambda=-\alpha\neq 0$, we obtain $0\geq |\alpha|^2\|S\|_A^2,$ which is a contradiction.
	
	Case 2: $\varepsilon\in (0, 1).$
	
 Alike of Case 1, the definition of $(\varepsilon,A)$-approximate produces
	 \begin{align*}
	 |\alpha+\lambda|^2\|S\|_A^2=\|T+\lambda S\|_A^2\geq \|T\|_A^2 -2 \varepsilon \|T\|_A \|\lambda S\|_A=(|\alpha|^2-2\varepsilon|\lambda||\alpha|)\|S\|_A^2,
	 \end{align*}
for all $\lambda \in \mathbb{C}$. Since $\|S\|_A\neq 0$, we have
	\begin{align*}
	|\alpha+\lambda|^2\geq (|\alpha|^2-2\varepsilon|\lambda||\alpha|).
	\end{align*}
Considering the sequence ${\frac{-\alpha}{2^n}}$ in place of $\lambda$ in the above inequality, we have
	 \begin{align*}
	 \left(1-\frac{1}{2^n}\right)^2|\alpha|^2\geq \left(1-\frac{\varepsilon}{2^{n-1}}\right)|\alpha|^2.
	 \end{align*}
Consequently,
	  \begin{align*}
	 \varepsilon\geq \left(1-\left(1-\frac{1}{2^n}\right)^2\right)2^{n-1}=1-\frac{1}{2^{n+1}}=a_n, \qquad n\in \mathbb{N}.
	 \end{align*}
However, since $\lim\limits_{n\to \infty}a_n=1$, we get $\varepsilon\geq 1$, a contradiction.
	
Thus, from both cases, we conclude that $T$ and $S$ are linearly independent.
\end{proof}

\begin{proposition}\label{rangesort}
Let $T, S \in \mathbb{B}_{A^{1/2}}(\mathcal{H})$ be such that
\begin{equation}\label{5rra}
 \|Tx+\lambda Sy\|_A^2\geq \|Tx\|_A^2-2\varepsilon |\lambda|\|Tx\|_A\|Sy\|_A,
\end{equation}
  for any $x, y\in \mathcal{H}$, $\lambda \in \mathbb{C}$ and for some $\varepsilon \in [0,1).$ Then $T\bot^B_{A,\varepsilon}S$.
\end{proposition}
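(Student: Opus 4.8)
The plan is to deduce $T\bot^B_{A,\varepsilon}S$ directly from Definition \ref{appnorm} by specialising the pointwise inequality \eqref{5rra} to coincident arguments and then optimising over them. Fix $\lambda\in\mathbb{C}$; the goal is to show $\|T+\lambda S\|_A^2\geq \|T\|_A^2-2\varepsilon\|T\|_A\|\lambda S\|_A$. Since $A\neq \mathbf{0}$, the $A$-unit sphere $\mathbb{S}_\mathcal{H}^A$ is non-empty, so from $\|T\|_A=\sup_{z\in\mathbb{S}_\mathcal{H}^A}\|Tz\|_A$ I can select a sequence $(z_n)\subset\mathbb{S}_\mathcal{H}^A$ with $\|Tz_n\|_A\to\|T\|_A$. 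The point worth flagging — and the reason one cannot merely substitute a norm-attaining vector into \eqref{5rra} — is that this supremum need not be attained, so the argument must conclude with a limiting step.

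Next I would apply \eqref{5rra} with $x=y=z_n$ and the fixed $\lambda$, obtaining
\[
\|Tz_n+\lambda Sz_n\|_A^2\geq \|Tz_n\|_A^2-2\varepsilon|\lambda|\,\|Tz_n\|_A\,\|Sz_n\|_A .
\]
Since $\|Sz_n\|_A\leq\|S\|_A$ and $\|Tz_n\|_A\geq 0$, the right-hand side is bounded below by $\|Tz_n\|_A^2-2\varepsilon|\lambda|\,\|S\|_A\,\|Tz_n\|_A$. On the other hand $T+\lambda S\in\mathbb{B}_{A^{1/2}}(\mathcal{H})$ and $z_n\in\mathbb{S}_\mathcal{H}^A$, so $\|Tz_n+\lambda Sz_n\|_A=\|(T+\lambda S)z_n\|_A\leq\|T+\lambda S\|_A$. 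Combining these two observations gives, for every $n$,
\[
\|T+\lambda S\|_A^2\geq \|Tz_n\|_A^2-2\varepsilon|\lambda|\,\|S\|_A\,\|Tz_n\|_A .
\]

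Finally, letting $n\to\infty$ and using the continuity of $t\mapsto t^2-2\varepsilon|\lambda|\,\|S\|_A\,t$, the right-hand side converges to $\|T\|_A^2-2\varepsilon|\lambda|\,\|S\|_A\,\|T\|_A=\|T\|_A^2-2\varepsilon\|T\|_A\,\|\lambda S\|_A$, whence $\|T+\lambda S\|_A^2\geq\|T\|_A^2-2\varepsilon\|T\|_A\,\|\lambda S\|_A$. As $\lambda\in\mathbb{C}$ was arbitrary, Definition \ref{appnorm} yields $T\bot^B_{A,\varepsilon}S$. I do not anticipate a real obstacle here: the only subtlety is the passage to the limit forced by the fact that $\|T\|_A$ is defined as a supremum rather than a maximum; the degenerate case $\|T\|_A=0$ is covered by the same chain of inequalities.
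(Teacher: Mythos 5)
Your proposal is correct and follows essentially the same route as the paper's proof: specialise \eqref{5rra} to $x=y$ on the $A$-unit sphere, bound $\|Sx\|_A$ by $\|S\|_A$, and pass to the supremum of $\|Tx\|_A$. The only cosmetic difference is that you realise the supremum via a maximising sequence and a continuity argument, whereas the paper takes the supremum over the unit sphere directly; both steps are equivalent here.
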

\begin{proof}
	Let $x\in \mathcal{H}$ with $\|x\|_A=1$ and $\lambda\in \mathbb{C}$, then by taking \eqref{5rra} into account, we obtain
	\begin{align}\label{ineq5}
	\|(T+\lambda S)x\|_A^2\geq \|Tx\|_A^2-2\varepsilon |\lambda|\|Tx\|_A\|Sx\|_A.
	\end{align}
Moreover, since $\|Tx\|_A\leq \|T\|_A$ and $\|Sx\|_A\leq \|S\|_A$, then \eqref{ineq5} yields that
	\begin{align*}
		\|(T+\lambda S)x\|_A^2\geq \|Tx\|_A^2-2\varepsilon |\lambda|\|T\|_A\|S\|_A.
	\end{align*}
	Taking the supremum over all  $x\in \mathcal{H}$ such that $\|x\|_A=1$ in the last inequality, we get
	\begin{align*}
	\|T+\lambda S\|_A^2\geq \|T\|_A^2-2\varepsilon |\lambda|\|T\|_A\|S\|_A,
	\end{align*}
	for any $\lambda \in \mathbb{C}.$ This means that $ T\bot^B_{A,\varepsilon}S.$
\end{proof}	
	
Given any $T\in \mathbb{B}_{A^{1/2}}(\mathcal{H})$, we denote by $\mathbb{M}_T^A$
the set of all $A$-unit vectors at which $T$ attains its norm, i.e.,
$\mathbb{M}_T^A=\big\{x\in \mathbb{S}_\mathcal{H}^A;\,\, \|Tx\|_A = \|T\|_A\big\}.$ In particular, if $x_0\in  \mathbb{M}_T^A$ such that $Tx_0 \bot^B_{A,\varepsilon} Sx_0$ for some $\varepsilon \in [0, 1)$, then for any $\lambda \in \mathbb{C}$ we have
\begin{align*}
\|(T+\lambda S)\|_A^2&\geq \|(T+\lambda S)x_0\|_A^2\geq \|Tx_0\|_A^2-2\varepsilon |\lambda|\|Tx_0\|_A\|Sx_0\|_A\nonumber\\
&\geq \|T\|_A^2-2\varepsilon |\lambda|\|T\|_A\|S\|_A,
\end{align*}
 i.e., $T\bot^B_{A,\varepsilon}S$. In conclusion, the hypothesis in Proposition \ref{rangesort} can be significantly weakened if $\mathbb{M}_T^A\neq \emptyset$. We record this observation as a proposition below.
\begin{proposition}
Let $T, S \in \mathbb{B}_{A^{1/2}}(\mathcal{H})$ be such that
\begin{equation}\label{5rra}
\|Tx_0+\lambda Sx_0\|_A^2\geq \|Tx_0\|_A^2-2\varepsilon |\lambda|\|Tx_0\|_A\|Sx_0\|_A,
\end{equation}
for any $x_0\in   \mathbb{M}_T^A$ and for some $\varepsilon \in [0,1).$ Then $T\bot^B_{A,\varepsilon}S$.
	\end{proposition}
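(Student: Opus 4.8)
The plan is essentially to promote to a standalone statement the chain of estimates already displayed in the paragraph immediately preceding the proposition. Before doing that, it should be noted that for the claim to make sense one must be in the situation $\mathbb{M}_T^A \neq \emptyset$ — this is the standing hypothesis of that preceding discussion, and it is implicitly part of the hypothesis here: if $\mathbb{M}_T^A$ were empty, condition \eqref{5rra} would be vacuous while the conclusion $T\bot^B_{A,\varepsilon}S$ would fail for generic pairs $T,S$. So the first step is to fix an arbitrary $x_0\in\mathbb{M}_T^A$, noting that by definition $x_0\in\mathbb{S}_\mathcal{H}^A$ and $\|Tx_0\|_A=\|T\|_A$.

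The second step uses the supremum representation of $\|\cdot\|_A$. Since $\mathbb{B}_{A^{1/2}}(\mathcal{H})$ is a (unital) subalgebra of $\mathbb{B}(\mathcal{H})$, we have $T+\lambda S\in\mathbb{B}_{A^{1/2}}(\mathcal{H})$ for every $\lambda\in\mathbb{C}$, so the identity $\|R\|_A=\sup_{x\in\mathbb{S}_\mathcal{H}^A}\|Rx\|_A$ applies to $R=T+\lambda S$ and gives $\|T+\lambda S\|_A\geq\|(T+\lambda S)x_0\|_A=\|Tx_0+\lambda Sx_0\|_A$. Squaring and inserting the hypothesis \eqref{5rra} evaluated at $x_0$ yields
\[
\|T+\lambda S\|_A^2 \geq \|Tx_0+\lambda Sx_0\|_A^2 \geq \|Tx_0\|_A^2 - 2\varepsilon|\lambda|\,\|Tx_0\|_A\,\|Sx_0\|_A .
\]
Finally I would substitute $\|Tx_0\|_A=\|T\|_A$ and use $\|Sx_0\|_A\leq\|S\|_A$ (legitimate since $x_0\in\mathbb{S}_\mathcal{H}^A$), which only enlarges the subtracted term; this produces $\|T+\lambda S\|_A^2 \geq \|T\|_A^2 - 2\varepsilon|\lambda|\,\|T\|_A\,\|S\|_A = \|T\|_A^2 - 2\varepsilon\|T\|_A\,\|\lambda S\|_A$ for every $\lambda\in\mathbb{C}$, which is exactly the inequality in Definition \ref{appnorm}, i.e.\ $T\bot^B_{A,\varepsilon}S$.

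There is no genuine obstacle here: the proof is a three-line estimate and the proposition is really a bookkeeping consequence of the computation preceding it. The only point worth flagging is the implicit assumption $\mathbb{M}_T^A\neq\emptyset$, without which the statement is not literally true; one could either add this to the hypotheses explicitly or phrase the conclusion conditionally.
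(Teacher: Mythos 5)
Your proof is correct and is essentially identical to the paper's argument, which appears verbatim in the paragraph immediately preceding the proposition: evaluate at some $x_0\in\mathbb{M}_T^A$, bound $\|T+\lambda S\|_A^2\geq\|(T+\lambda S)x_0\|_A^2$, apply the hypothesis, and use $\|Tx_0\|_A=\|T\|_A$ together with $\|Sx_0\|_A\leq\|S\|_A$. Your remark that the statement implicitly requires $\mathbb{M}_T^A\neq\emptyset$ matches the paper's own framing of the result.
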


 We need two technical lemmas in the sequel.

\begin{lemma}\cite{za}\label{Lemma: 1}
Let $T,S\in \mathbb{B}_{A^{1/2}}(\mathcal{H})$. Then, the set
\begin{align*}\label{Lambda}
W_A(T, S):=\big\{\lambda\in \mathbb{C}:~\langle Sx_n, Tx_n \rangle_A \to \lambda,~\{x_n\}\subseteq \mathbb{S}_\mathcal{H}^A,~\|Tx_n\|_A\to \|T\|_A\big\}.
\end{align*} is nonempty, convex and
compact subset of $\mathbb{C}$.
\end{lemma}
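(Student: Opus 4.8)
The plan is to treat the three assertions — nonemptiness, convexity, and compactness — in that order, working entirely with sequences in the $A$-unit sphere. For \emph{nonemptiness}, I would start from the definition of $\|T\|_A$ as a supremum over $\mathbb{S}_\mathcal{H}^A$: pick a sequence $\{y_n\}\subseteq \mathbb{S}_\mathcal{H}^A$ with $\|Ty_n\|_A\to\|T\|_A$. Then $\{\langle Sy_n,Ty_n\rangle_A\}$ is a sequence in $\mathbb{C}$ which is bounded, since by Cauchy--Schwarz for $\langle\cdot,\cdot\rangle_A$ one has $|\langle Sy_n,Ty_n\rangle_A|\le \|Sy_n\|_A\|Ty_n\|_A\le \|S\|_A\|T\|_A<\infty$ (here $A$-boundedness of $S$ and $T$ is used). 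By Bolzano--Weierstrass, pass to a subsequence along which $\langle Sy_n,Ty_n\rangle_A$ converges to some $\lambda$; this subsequence still satisfies $\|Ty_n\|_A\to\|T\|_A$, so $\lambda\in W_A(T,S)$.

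For \emph{compactness}, boundedness of $W_A(T,S)$ is immediate from the same Cauchy--Schwarz bound: every $\lambda\in W_A(T,S)$ satisfies $|\lambda|\le \|S\|_A\|T\|_A$. For closedness, take $\lambda_k\in W_A(T,S)$ with $\lambda_k\to\lambda$; for each $k$ choose (by definition of $W_A$) a vector $x_k\in\mathbb{S}_\mathcal{H}^A$ with $\big|\,\|Tx_k\|_A-\|T\|_A\,\big|<1/k$ and $\big|\langle Sx_k,Tx_k\rangle_A-\lambda_k\big|<1/k$; then $\|Tx_k\|_A\to\|T\|_A$ and $\langle Sx_k,Tx_k\rangle_A\to\lambda$, so $\lambda\in W_A(T,S)$. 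This is the standard diagonal-type extraction and is routine.

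The main point — and the step I would expect to take the most care — is \emph{convexity}. Given $\lambda,\mu\in W_A(T,S)$ and $t\in[0,1]$, I want $t\lambda+(1-t)\mu\in W_A(T,S)$. Fix norming sequences $\{x_n\}$ and $\{y_n\}$ in $\mathbb{S}_\mathcal{H}^A$ with $\|Tx_n\|_A\to\|T\|_A$, $\langle Sx_n,Tx_n\rangle_A\to\lambda$, and similarly $\|Ty_n\|_A\to\|T\|_A$, $\langle Sy_n,Ty_n\rangle_A\to\mu$. The natural idea is to interpolate: for a suitable index assignment, consider unit-normalizations of $\alpha x_n + \beta y_m$ for appropriately chosen coefficients, and show that by tuning the ``mixing ratio'' the quantity $\langle S\xi,T\xi\rangle_A$ for the normalized vector $\xi$ sweeps continuously between (values near) $\lambda$ and $\mu$ while $\|T\xi\|_A$ stays close to $\|T\|_A$. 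Concretely, I would look at vectors of the form $z(\theta)= \cos\theta\, x_n + \sin\theta\, e^{i\gamma} y_n$ (after possibly adjusting phases so that the relevant cross terms behave), track $\|T z(\theta)\|_A^2$ and $\langle S z(\theta), T z(\theta)\rangle_A$ as functions of $\theta$, and invoke a continuity/intermediate-value argument on the real parameter $\theta$ together with a diagonal choice of $n$ to land exactly at $t\lambda+(1-t)\mu$ in the limit. The delicate part is controlling the cross terms $\langle Tx_n,Ty_n\rangle_A$, $\langle Sx_n,Ty_n\rangle_A$, etc., which need not converge; one handles this by passing to a further subsequence (using the boundedness of all these inner products via Cauchy--Schwarz and $A$-boundedness) so that \emph{all} the relevant scalar quantities converge, and only then run the IVT. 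Since this lemma is quoted from \cite{za}, I would in fact cite that reference for the full argument and only indicate this interpolation-plus-continuity scheme as the mechanism behind convexity.
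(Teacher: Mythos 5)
The paper offers no proof of this lemma at all: it is imported verbatim from Zamani \cite{za}, so there is no internal argument to compare yours against. Your treatment of nonemptiness and compactness is correct and complete --- the only hypotheses you use ($\|T\|_A,\|S\|_A<\infty$ from $A$-boundedness, $\mathbb{S}_\mathcal{H}^A\neq\emptyset$ from $A\neq 0$, the Cauchy--Schwarz inequality for $\langle\cdot,\cdot\rangle_A$, and the routine $1/k$-selection for closedness) are all available, and these two parts already go beyond what the paper records. For convexity your plan is the standard Toeplitz--Hausdorff-type mechanism and it identifies the genuinely delicate points, but two of them deserve to be made explicit if you were to write it out: first, a plain intermediate value theorem on $\theta\mapsto\langle Sz(\theta),Tz(\theta)\rangle_A$ cannot hit a prescribed point of $\mathbb{C}$, so one must first apply an affine change of variables sending $\lambda,\mu$ to $1,0$ and then choose the phase $\gamma$ so that the imaginary part vanishes identically along the path, reducing the target to a real interval where IVT applies; second, the claim that $\|Tz(\theta)\|_A$ stays near $\|T\|_A$ along the path is not automatic and is most cleanly obtained by applying Cauchy--Schwarz to the positive semidefinite form $(u,v)\mapsto \|T\|_A^2\langle u,v\rangle_A-\langle Tu,Tv\rangle_A$, which shows that the span of two almost-norming $A$-unit vectors consists of almost-norming vectors (after handling the degenerate case where $x_n$ and $y_n$ become $A$-parallel, in which case $\lambda=\mu$ and there is nothing to prove). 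Since you ultimately defer the full convexity argument to \cite{za}, exactly as the paper does, your proposal is consistent with the source and contains no error.
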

%

For any non-empty subset $\Lambda$ of $\mathbb{C}$, denote the convex hull of $\Lambda$ by $\conv(\Lambda)$. For any non-zero complex number $\mu$, $\mathsf{sgn}(\mu)=\frac{\mu}{|\mu|}$.

\begin{lemma}\label{Lemma: 2}
Let $\Lambda$ be a non-empty compact subset of $\mathbb{C}$ with the property that for any $\mu \in S^1$ there exists $\lambda \in \Lambda$ such that $\R~\mu \lambda \geq -r$ for some positive real number $r$. Then $\conv(\Lambda)\cap B(0,r)\neq \emptyset$, where $B(0,r)$ denotes the closed disc of radius $r$, centred at $0$.
\end{lemma}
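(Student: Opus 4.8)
The plan is to argue by contradiction using a separating hyperplane. Suppose $\conv(\Lambda)\cap B(0,r)=\emptyset$. Since $\Lambda$ is compact, $\conv(\Lambda)$ is a compact convex subset of $\mathbb{C}\cong\mathbb{R}^2$, and $B(0,r)$ is a compact convex set disjoint from it. By the separating hyperplane theorem in $\mathbb{R}^2$ applied to these two disjoint compact convex sets, there is a nonzero linear functional, which we may realize as $z\mapsto\R(\overline{v}z)$ for some $v\in\mathbb{C}\setminus\{0\}$, and a real constant $c$ strictly separating them: $\R(\overline{v}z)<c$ for all $z\in B(0,r)$ while $\R(\overline{v}\lambda)>c$ for all $\lambda\in\conv(\Lambda)$, in particular for all $\lambda\in\Lambda$. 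Normalizing, we may assume $|v|=1$.

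Next I would evaluate the left-hand supremum: $\sup_{z\in B(0,r)}\R(\overline{v}z)=r|v|=r$, attained at $z=rv$. Hence $c\ge r$, and actually $c>$ any value $<r$; more carefully, strict separation gives $\R(\overline v\lambda)>c\ge\sup_{z\in B(0,r)}\R(\overline v z)=r$ for every $\lambda\in\Lambda$. Now apply the hypothesis with the choice $\mu:=-v\in S^1$: there exists $\lambda_0\in\Lambda$ with $\R(\mu\lambda_0)\ge -r$, i.e. $\R(-v\lambda_0)\ge -r$, i.e. $\R(v\lambda_0)\le r$.

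To close the contradiction I need the hypothesis phrased in terms of $\overline{v}$ rather than $v$; since $\mu$ ranges over all of $S^1$, I simply apply it instead to $\mu:=-\overline{v}\in S^1$, obtaining $\lambda_0\in\Lambda$ with $\R(-\overline{v}\lambda_0)\ge -r$, that is $\R(\overline{v}\lambda_0)\le r$. This directly contradicts $\R(\overline v\lambda_0)>r$ established above. Therefore $\conv(\Lambda)\cap B(0,r)\neq\emptyset$.

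The only delicate point is bookkeeping the conjugation/sign conventions so that the direction in which the hypothesis controls $\Lambda$ matches the direction in which the separating functional is violated; everything else is the standard separation argument for disjoint compact convex planar sets together with the elementary identity $\sup_{|z|\le r}\R(\overline v z)=r$. One should also note the degenerate edge cases: if $r$ is large enough that $0\in\conv(\Lambda)$ (or more generally $\conv(\Lambda)$ already meets $B(0,r)$) there is nothing to prove, and if $\Lambda$ is a single point the claim is immediate from applying the hypothesis to $\mu=-\mathsf{sgn}(\lambda)$ when $\lambda\neq 0$.
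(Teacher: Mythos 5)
Your proof is correct and follows essentially the same route as the paper's: both argue by contradiction, use the compactness of $\conv(\Lambda)$ (which the paper verifies explicitly via a Carath\'eodory-type parametrization, while you quote it as a standard fact), and then separate $\conv(\Lambda)$ from the disc $B(0,r)$ --- the paper produces the separating direction by hand as the unique nearest point of $\conv(\Lambda)$ to the origin, whereas you invoke the separating hyperplane theorem for two disjoint compact convex sets. Your conjugation bookkeeping, the computation $\sup_{|z|\le r}\R(\overline{v}z)=r$, and the choice $\mu=-\overline{v}$ correctly produce the contradiction with the hypothesis.
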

\begin{proof}
Suppose by contradiction $\conv(\Lambda)\cap B(0,r)=\emptyset$. Let $\Omega$ be a subset of the cube $[0,1]^3$, defined by
\[\Omega:=\{(t_1,t_2,t_3)\in [0,1]^3:~t_1+t_2+t_3=1\}.\]
Note that
\[\conv(\Lambda)=\Phi\left(\Lambda^3\times \Omega\right),\]
where $\Phi:\left(\Lambda^3\times \Omega\right)\to \mathbb{C}$ is a function, defined by
\[\Phi((\lambda_1,\lambda_2, \lambda_3),(t_1,t_2,t_3))=\sum\limits_{k=1}^3 \lambda_k t_k.\]
Since $\Lambda$ and $\Omega$ are compact, so is $\left(\Lambda^3\times \Omega\right)$. Moreover, $\conv(\Lambda)$ is compact, since $\Phi$ is continuous.
\medskip

Now, $\conv(\Lambda)$ is a compact convex subset of $\mathbb{C}$ which does not contain $0$. Therefore, $\conv(\Lambda)$ possesses a unique closest point to the origin, say $|d|e^{i\alpha}$, for some $\alpha\in [0,2\pi)$. Note that $|d|> r$, as $\conv(\Lambda)\cap B(0,r)=\emptyset$. However, this shows that
\[\R~e^{i(\pi-\alpha)}\lambda < -r,\qquad \lambda\in \conv(\Lambda)\supseteq \Lambda.\]
Therefore, we arrive at a contradiction, and the proof follows.
\end{proof}
In \cite[Theorem 3.2 and 3.3]{Ch.St.Wo}, Chmielinski et al. characterized the approximate orthogonality in the class of bounded linear operators on a real Hilbert space.  Our next result provides a complete  characterization of the $(\varepsilon,A)$-approximate orthogonality in $\mathbb{B}_{A^{1/2}}(\mathcal{H})$. 

\begin{theorem}\label{Theorem: 1}
Let $T,S\in \mathbb{B}_{A^{1/2}}(\mathcal{H})$. Then for any given $\varepsilon\in [0,1)$, the following conditions are equivalent:
\begin{itemize}
    \item [(i)] $T\perp_{A,\varepsilon}^B S$.
    \item[(ii)] $W_A(T, S)\cap B(0,\varepsilon\|T\|_A\|S\|_A)\neq \emptyset$.
\end{itemize}
\end{theorem}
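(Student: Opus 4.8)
The plan is to prove the two implications separately, after disposing of the degenerate cases. If $T=0$ or $S=0$ then $\varepsilon\|T\|_A\|S\|_A=0$ and $W_A(T,S)=\{0\}$, so both (i) and (ii) hold and there is nothing to prove; hence I may assume $M:=\|T\|_A>0$ and $N:=\|S\|_A>0$, and I put $r:=\varepsilon MN$.

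For the implication (ii) $\Rightarrow$ (i), which is elementary, I would take $\lambda_0\in W_A(T,S)$ with $|\lambda_0|\le r$ together with a witnessing sequence $\{x_n\}\subseteq\mathbb{S}_\mathcal{H}^A$ satisfying $\|Tx_n\|_A\to M$ and $\langle Sx_n,Tx_n\rangle_A\to\lambda_0$. For any $\lambda\in\mathbb{C}$, expanding the $A$-seminorm along $x_n$ gives
\[\|T+\lambda S\|_A^2\ \ge\ \|(T+\lambda S)x_n\|_A^2\ =\ \|Tx_n\|_A^2+2\R\!\big(\lambda\langle Sx_n,Tx_n\rangle_A\big)+|\lambda|^2\|Sx_n\|_A^2,\]
and after discarding the last non-negative term and letting $n\to\infty$ I obtain $\|T+\lambda S\|_A^2\ge M^2+2\R(\lambda\lambda_0)\ge M^2-2|\lambda|\,|\lambda_0|\ge M^2-2r|\lambda|=\|T\|_A^2-2\varepsilon\|T\|_A\|\lambda S\|_A$, i.e. $T\perp_{A,\varepsilon}^B S$.

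The substantive direction is (i) $\Rightarrow$ (ii), and there the strategy is to verify the hypothesis of \Cref{Lemma: 2} for $\Lambda:=W_A(T,S)$ with that same $r$, and then to invoke the convexity of $W_A(T,S)$ furnished by \Cref{Lemma: 1}. Fix $\mu\in S^1$. For each $t>0$, applying (i) with $\lambda=t\mu$ yields $\|T+t\mu S\|_A^2\ge M^2-2rt$, so I can choose $x_t\in\mathbb{S}_\mathcal{H}^A$ with $\|(T+t\mu S)x_t\|_A^2>\|T+t\mu S\|_A^2-t^2\ge M^2-2rt-t^2$. Expanding $\|(T+t\mu S)x_t\|_A^2=\|Tx_t\|_A^2+2t\R(\mu\langle Sx_t,Tx_t\rangle_A)+t^2\|Sx_t\|_A^2$ and using the crude bounds $\|Tx_t\|_A\le M$, $\|Sx_t\|_A\le N$, $|\langle Sx_t,Tx_t\rangle_A|\le MN$, two estimates drop out after dividing by $2t$: first $\R(\mu\langle Sx_t,Tx_t\rangle_A)>-r-\tfrac{t}{2}(1+N^2)$, and second $\|Tx_t\|_A^2>M^2-2MN(1+\varepsilon)t-(1+N^2)t^2$, which forces $\|Tx_t\|_A\to M$ as $t\to0^+$. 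Choosing $t_n\to0^+$ and passing to a subsequence along which the bounded scalars $\langle Sx_{t_n},Tx_{t_n}\rangle_A$ converge to some $\lambda$, I get $\lambda\in W_A(T,S)$ with $\R(\mu\lambda)\ge-r$. Thus \Cref{Lemma: 2} gives $\conv(\Lambda)\cap B(0,r)\ne\emptyset$, and since $W_A(T,S)$ is convex, $\conv(\Lambda)=\Lambda$, whence $W_A(T,S)\cap B(0,\varepsilon\|T\|_A\|S\|_A)\ne\emptyset$, which is (ii).

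I expect the bookkeeping in (i) $\Rightarrow$ (ii) — selecting the near-norming vectors $x_t$, controlling simultaneously $\|Tx_t\|_A\to M$ and the real part of the cross term, and then certifying that the cluster point of $\langle Sx_t,Tx_t\rangle_A$ genuinely lies in $W_A(T,S)$ — to be the main obstacle; all of the geometric content is outsourced to \Cref{Lemma: 2}. One small gap to patch is that \Cref{Lemma: 2} is stated for $r>0$, so the case $\varepsilon=0$ should be treated on the side: either by noting that the proof of \Cref{Lemma: 2} works verbatim with $r=0$ (if $0\notin\conv(\Lambda)$, its closest point $|d|e^{i\alpha}$ to the origin has $|d|>0$, and the supporting-line inequality reads $\R(e^{i(\pi-\alpha)}\lambda)\le-|d|<0=-r$), or by recalling that for $\varepsilon=0$ condition (i) is ordinary $A$-Birkhoff--James orthogonality, whose characterization $0\in W_A(T,S)$ is already available from \cite{za}.
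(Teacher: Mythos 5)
Your proof is correct and follows essentially the same route as the paper's: for (i)$\Rightarrow$(ii) you extract near-norming vectors for $T+t\mu S$ as $t\to 0^{+}$, pass to a convergent subsequence of $\langle Sx_t,Tx_t\rangle_A$, and feed the resulting cluster points into Lemma~\ref{Lemma: 2} together with the convexity of $W_A(T,S)$ from Lemma~\ref{Lemma: 1}; for (ii)$\Rightarrow$(i) you expand $\|(T+\lambda S)x_n\|_A^2$ along a witnessing sequence and discard the quadratic term. Your write-up is in fact marginally tidier than the paper's: you drop the superfluous Carath\'eodory detour in (ii)$\Rightarrow$(i) (the point $\alpha\in W_A(T,S)\cap B(0,\varepsilon\|T\|_A\|S\|_A)$ already satisfies $\R\,\mathsf{sgn}(\mu)\alpha\geq-\varepsilon\|T\|_A\|S\|_A$ for every $\mu$), and you explicitly handle the degenerate cases and the $r=0$ instance of Lemma~\ref{Lemma: 2}, which the paper passes over in silence.
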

\begin{proof}
(i)$\implies$ (ii): Let $\mu$ be any unimodular constant. Since $T\perp_{A,\varepsilon}^B S$, we have
\[\left\|T+\frac{\mu}{n}S\right\|_A^2> \|T\|_A^2+\frac{1}{n^2}\|S\|_A^2-\frac{2}{n}\varepsilon \|T\|_A\|S\|_A-\frac{1}{n^2}, \qquad n\in \mathbb{N}.\]
In fact, for each $n\in \mathbb{N}$, we can find $x_n\in \mathbb{S}_{\mathcal{H}}^A$ such that
\[\left\|\left(T+\frac{\mu}{n}S\right)x_n\right\|_A^2> \|T\|_A^2+\frac{1}{n^2}\|S\|_A^2-\frac{2}{n}\varepsilon \|T\|_A\|S\|_A-\frac{1}{n^2}.\]
On simplification,
\[\|Tx_n\|_A^2>\|T\|_A^2-\frac{1}{n^2}(\|S\|_A^2+1)-\frac{2}{n}\left(\R~\mu\left\langle Sx_n, Tx_n\right\rangle_A-\varepsilon \|T\|_A\|S\|_A\right).\]
On the other hand,
\[\R~\mu\left\langle Sx_n, Tx_n\right\rangle_A > \frac{n}{2}(\|Tx_n\|_A^2-\|T\|_A^2)-\frac{1}{2n}(\|S\|_A^2+1)-\varepsilon \|T\|_A\|S\|_A.\]
Since both $\{\|Tx_n\|_A\}$ and $\{\left\langle Sx_n, Tx_n\right\rangle_A\}$ are bounded sequences in $\mathbb{C}$, we can find a common subsequence $\{n_k\}$ of natural numbers such that both $\left\{\left\|Tx_{n_k}\right\|_A\right\}$ and $\left\{\left\langle Sx_{n_k}, Tx_{n_k}\right\rangle_A\right\}$ are convergent. However, since
\[\|T\|_A\geq \lim\limits_{k\to \infty}\left\|Tx_{n_k}\right\|_A\geq \|T\|_A,\]
we get $\left\|Tx_{n_k}\right\|_A\to \|T\|_A.$ Moreover, since $\{x_{n_k}\}\subseteq \mathbb{S}_{\mathcal{H}}^A$,  $\lim\limits_{k\to \infty} \left\langle Sx_{n_k}, Tx_{n_k}\right\rangle_A\in W_A(T, S)$. Also, note that
\[\R~\mu  \lim\limits_{k\to \infty}\left\langle Sx_{n_k}, Tx_{n_k}\right\rangle_A \geq -\varepsilon \|T\|_A\|S\|_A.\]
Therefore, by Lemma \ref{Lemma: 2}, we get
\[W_A(T, S)\cap B(0,\varepsilon\|T\|_A\|S\|_A)\neq \emptyset.\]

\medskip

(ii) $\implies$ (i): Let $\mu \in \mathbb{C}\setminus \{0\}$ be arbitrary. We show that there exists $\lambda\in W_A(T, S)$ such that $\R~\mathsf{sgn}(\mu) \lambda \geq -\varepsilon\|T\|_A\|S\|_A.$

\medskip

Clearly, there exists a member $\alpha$ of $W_A(T, S)$ such that \[|\R~\alpha|\leq |\alpha|\leq\varepsilon\|T\|_A\|S\|_A.\]
By the Caratheodory Theorem, there  exist $\lambda_i\in W_A(T, S)$ and $t_i\in [0,1]$; $1\leq i \leq 3$ such that
\[\sum_{i=1}^3t_i\lambda_i=\alpha, ~ \sum\limits_{i=1}^3 t_i =1.\]
Now,
\[\sum_{i=1}^3t_i\R~\mathsf{sgn}(\mu)\lambda_i=\R~\mathsf{sgn}(\mu)\alpha.\]
If $\R~\mathsf{sgn}(\mu)\lambda_i < -\varepsilon\|T\|_A\|S\|_A$ for all $1\leq i \leq 3$, then
\[\R~\mathsf{sgn}(\mu)\alpha=\sum_{i=1}^3t_i\R~\mathsf{sgn}(\mu)\lambda_i<-\varepsilon\|T\|_A\|S\|_A.\]
However, this is a contradiction, as
\[|\R~\mathsf{sgn}(\mu) \alpha|\leq |\mathsf{sgn}(\mu)\alpha| \leq \varepsilon\|T\|_A\|S\|_A.\]
Therefore, at least for one $i\in \{1,2,3\}$, we must have
\[\R~\mathsf{sgn}(\mu) \lambda_i \geq -\varepsilon\|T\|_A\|S\|_A,\]
as desired. Altogether, this shows that given any non-zero scalar $\mu$, there exists a sequence $\{x_n\}\subseteq \mathbb{S}_\mathcal{H}^A$ such that
\[\lim_n \|Tx_n\|_A= \|T\|_A,~\lim_n \left\langle Sx_{n}, Tx_{n}\right\rangle_A=\lambda,~\R~\mathsf{sgn}(\mu)~\lambda \geq -\varepsilon \|T\|_A\|S\|_A.\]
 Now, for $n\in \mathbb{N}$
\begin{align*}
\|T+\mu S\|_A^2 & \geq \|Tx_n+\mu Sx_n\|_A^2\\
& = \|Tx_n\|_A^2+|\mu|^2\|Sx_n\|_A^2+2~\R~\mu \left\langle Sx_{n}, Tx_{n}\right\rangle_A\\
& \geq \|Tx_n\|_A^2 + 2|\mu|~\R~\mathsf{sgn}(\mu)~ \left\langle Sx_{n}, Tx_{n}\right\rangle_A.\\
\end{align*}
Letting $n\to \infty,$ we obtain
\begin{align*}
\|T+\mu S\|_A^2 &\geq \lim\limits_{n\to \infty} \|Tx_n+\mu Sx_n\|_A^2\\
& \geq \lim\limits_{n\to \infty} \big( \|Tx_n\|_A^2 + 2|\mu|~\R~\mathsf{sgn}(\mu)~ \left\langle Sx_{n}, Tx_{n}\right\rangle_A\big).\\
& \geq \|T\|_A^2 - 2\varepsilon  \|T\|_A\|\mu S\|_A.
\end{align*}
In other words, $T\perp_{A,\varepsilon}^B A$, and the proof is completed.
\end{proof}
As an application of Theorem \ref{Theorem: 1}, we state the next example.
\begin{example}
		Let $\mathcal{H}=l^2(\mathbb{N})$ and we consider the following linear operators defined on $\mathcal{H}.$
		\begin{enumerate}
			\item $A(x_1, x_2,\cdots, x_n, \cdots)=(x_1, x_2, 0, x_4,\cdots, x_{n_0}, 0, 0, \cdots)$ where $n_0\in \mathbb{N},$ fixed and $n_0\geq 3,$
				\item $Te_1=\alpha e_1$ and $Te_j=\alpha_{j-1}e_j$, where $\{e_j\}_{j\in \mathbb{N}}$ is the canonical basis of $\mathcal{H}$ and $0<\alpha_1<\cdots<\alpha,$
			\item $Se_1=\frac{\beta}{2}e_1$, $Se_2=\beta e_2$ and $Se_j=e_j$ for any $j\geq 3$ with $1\leq \frac{\beta}{2}<\beta.$
			\end{enumerate}
		Clearly, $A$ is  positive. Also, we have that $\|T\|_A=\alpha=\|Te_1\|_A$ and $\|S\|_A=\beta=\|Se_2\|_A$. \\
		Let $x_n=e_1$ for any $n\in \mathbb{N}$, then we get $\|x_n\|_A=\|e_1\|_A=1, \|Tx_n\|_A=\|Te_1\|_A=\alpha$, and
		\[\langle Sx_n, Tx_n\rangle_A=\langle Se_1, Te_1\rangle_A=\frac{\beta}{2}\alpha,\]
		thus $\frac{\beta}{2}\alpha\in W_A(T, S)$. \\
	Then, for any $\varepsilon \in \left(\frac 12, 1\right)$, we have that $W_A(T, S)\cap B(0,\varepsilon\|T\|_A\|S\|_A)\neq \emptyset$ and so  it follows from Theorem \ref{Theorem: 1} that  $T\bot^B_{A,\varepsilon} S.$
\end{example}
\begin{remark}
The characterization of the Birkhoff-James orthogonality in the context of bounded linear operators obtained by Magajna in \cite{Mag} is a special case of the above result. Notice that our techniques here are completely different  from that of \cite{Mag}.
\end{remark}
\begin{corollary}
Let $T,S\in \mathbb{B}(\mathcal{H})$, then $T\perp_B S$ if and only if there exists a sequence of unit vectors $\{x_n\}$ such that $\|Tx_n\|\to \|T\|$ and $\langle Tx_n, Sx_n\rangle\to 0.$
\end{corollary}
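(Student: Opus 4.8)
The plan is to derive this statement as the special case $A=I$, $\varepsilon=0$ of Theorem \ref{Theorem: 1}. First I would record that when $A=I$ all the weighted structures degenerate to the classical ones: $\mathbb{B}_{A^{1/2}}(\mathcal{H})=\mathbb{B}(\mathcal{H})$, the seminorm $\|\cdot\|_A$ is the operator norm $\|\cdot\|$, $\mathbb{S}_\mathcal{H}^A$ is the ordinary unit sphere of $\mathcal{H}$, $\langle\cdot,\cdot\rangle_A=\langle\cdot,\cdot\rangle$, and consequently
\[W_I(T,S)=\big\{\lambda\in\mathbb{C}:~\langle Sx_n,Tx_n\rangle\to\lambda,~\{x_n\}\subseteq\mathbb{S}_\mathcal{H},~\|Tx_n\|\to\|T\|\big\}.\]
In particular, any $T,S\in\mathbb{B}(\mathcal{H})$ automatically lie in $\mathbb{B}_{A^{1/2}}(\mathcal{H})$, so Theorem \ref{Theorem: 1} applies without further hypotheses. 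Next I would observe that the $(\varepsilon,A)$-approximate orthogonality relation $\perp^B_{I,0}$ is precisely the classical Birkhoff--James orthogonality $\perp_B$: indeed, Definition \ref{appnorm} with $\varepsilon=0$ reads $\|T+\lambda S\|^2\ge\|T\|^2$ for all $\lambda\in\mathbb{C}$, which---both sides being nonnegative---is equivalent to $\|T+\lambda S\|\ge\|T\|$ for all $\lambda$.

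Then I would feed $A=I$ and $\varepsilon=0$ into Theorem \ref{Theorem: 1}. Since $\varepsilon\|T\|\|S\|=0$, the disc $B(0,\varepsilon\|T\|_A\|S\|_A)$ collapses to the singleton $\{0\}$, so condition (ii) becomes $0\in W_I(T,S)$; unwinding the definition of $W_I(T,S)$, this says exactly that there is a sequence $\{x_n\}$ of unit vectors with $\|Tx_n\|\to\|T\|$ and $\langle Sx_n,Tx_n\rangle\to 0$. Finally, since $\langle Tx_n,Sx_n\rangle=\overline{\langle Sx_n,Tx_n\rangle}$, the condition $\langle Sx_n,Tx_n\rangle\to 0$ is equivalent to $\langle Tx_n,Sx_n\rangle\to 0$, which yields the stated equivalence.

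There is no genuine obstacle here: the entire content is carried by Theorem \ref{Theorem: 1}, and the corollary is a transparent specialization obtained by setting $A=I$ and collapsing the tolerance $\varepsilon$ to $0$. This is precisely the route by which the classical Magajna / Bhatia--\v{S}emrl / Paul characterization is recovered, as already anticipated in the preceding remark.
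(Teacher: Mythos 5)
Your proposal is correct and follows exactly the paper's route: the paper likewise obtains the corollary by setting $A=I$ and $\varepsilon=0$ in Theorem \ref{Theorem: 1}, noting that the condition collapses to $0\in W_I(T,S)$. Your additional remarks (that $\perp^B_{I,0}$ coincides with $\perp_B$ and that conjugation lets one swap $\langle Sx_n,Tx_n\rangle$ with $\langle Tx_n,Sx_n\rangle$) merely make explicit what the paper leaves implicit.
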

\begin{proof}
Putting $A=I$ and $\varepsilon=0$ in the above theorem, we get
\begin{align*}
T\perp_B S & \iff 0\in\{\lambda \in \mathbb{C}:~\langle Sx_n, Tx_n \rangle \to \lambda,~\{x_n\}\subseteq \mathbb{S}_{\mathcal{H}}^I,~\|Tx_n\|\to \|T\|\}.\\
& \iff 0\in W_I(T, S).
\end{align*}
\end{proof}
	
The characterization of $(\varepsilon,A)$-approximate orthogonality can also be phrased in the following form, without using the set $W_A.$

\begin{theorem}\label{apporthobh}
	Let $T, S\in \mathbb{B}_{A^{1/2}}(\mathcal{H}) $  and $\varepsilon \in [0, 1).$ Then for $T\bot^B_{A,\varepsilon} S$ if and only if for each $\alpha \in [0, 2\pi)$, there exists a sequence $\{x_n^{\alpha}\}\subseteq \mathbb{S}_\mathcal{H}^A=\{x\in \mathcal{H}: \|x\|_A=1 \}$ such that the following conditions hold:
	\begin{itemize}
		\item[(i)] $\lim\limits_{n\to \infty}\|Tx_n^{\alpha}\|_A=\|T\|_A$.
		
		\item[(ii)] $\lim\limits_{n\to \infty}\R(e^{-i\alpha}\langle Tx_n^{\alpha}, Sx_n^{\alpha}  \rangle_A)\geq -\varepsilon \|T\|_A\|S\|_A.$
	\end{itemize}
\end{theorem}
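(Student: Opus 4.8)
The plan is to read off Theorem~\ref{apporthobh} from Theorem~\ref{Theorem: 1}, viewing it as the same characterization with the set $W_A(T,S)$ unpacked into explicit approximating sequences. The only structural fact needed beyond Theorem~\ref{Theorem: 1} is conjugate symmetry of the form, $\langle Tx,Sx\rangle_A=\overline{\langle Sx,Tx\rangle_A}$ for all $x\in\mathcal H$, which ties condition (ii) --- phrased in terms of $\langle Tx_n^\alpha,Sx_n^\alpha\rangle_A$ --- to the quantity $\langle Sx_n,Tx_n\rangle_A$ occurring in the definition of $W_A(T,S)$. I will also use freely that $|\langle Sx,Tx\rangle_A|\le\|Sx\|_A\|Tx\|_A\le\|S\|_A\|T\|_A$ for every $x\in\mathbb S_\mathcal H^A$.

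For the \emph{if} direction I would argue directly, with no reference to $W_A$. Fix $\mu\in\mathbb C$, say $\mu=|\mu|e^{i\alpha}$; the case $\mu=0$ is immediate from the definition of $\bot^B_{A,\varepsilon}$, so suppose $\mu\neq0$ and take $\{x_n^\alpha\}\subseteq\mathbb S_\mathcal H^A$ satisfying (i) and (ii). For each $n$,
\begin{align*}
\|T+\mu S\|_A^2
&\ge\|Tx_n^\alpha+\mu Sx_n^\alpha\|_A^2\\
&=\|Tx_n^\alpha\|_A^2+|\mu|^2\|Sx_n^\alpha\|_A^2+2|\mu|\,\R\big(e^{-i\alpha}\langle Tx_n^\alpha,Sx_n^\alpha\rangle_A\big)\\
&\ge\|Tx_n^\alpha\|_A^2+2|\mu|\,\R\big(e^{-i\alpha}\langle Tx_n^\alpha,Sx_n^\alpha\rangle_A\big),
\end{align*}
and letting $n\to\infty$ and invoking (i)--(ii) yields $\|T+\mu S\|_A^2\ge\|T\|_A^2-2\varepsilon|\mu|\,\|T\|_A\|S\|_A=\|T\|_A^2-2\varepsilon\|T\|_A\|\mu S\|_A$. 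Since $\mu$ is arbitrary, $T\bot^B_{A,\varepsilon}S$; this is precisely the computation that closes the proof of Theorem~\ref{Theorem: 1}.

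For the \emph{only if} direction I would invoke Theorem~\ref{Theorem: 1}: from $T\bot^B_{A,\varepsilon}S$ it produces $\lambda_0\in W_A(T,S)$ with $|\lambda_0|\le\varepsilon\|T\|_A\|S\|_A$. By the definition of $W_A(T,S)$ there is one sequence $\{x_n\}\subseteq\mathbb S_\mathcal H^A$ with $\|Tx_n\|_A\to\|T\|_A$ and $\langle Sx_n,Tx_n\rangle_A\to\lambda_0$, hence $\langle Tx_n,Sx_n\rangle_A\to\overline{\lambda_0}$. Setting $x_n^\alpha:=x_n$ for every $\alpha\in[0,2\pi)$, condition (i) is immediate, and
\[
\lim_{n\to\infty}\R\big(e^{-i\alpha}\langle Tx_n^\alpha,Sx_n^\alpha\rangle_A\big)=\R\big(e^{-i\alpha}\overline{\lambda_0}\big)\ge-|\lambda_0|\ge-\varepsilon\|T\|_A\|S\|_A,
\]
which is condition (ii); this finishes the equivalence.

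Since Theorem~\ref{Theorem: 1} carries all the analytic weight, there is no genuine obstacle here; the one point worth stressing is that in the \emph{only if} direction a single sequence $\{x_n\}$ serves all angles $\alpha$ at once, which works precisely because $|\lambda_0|\le\varepsilon\|T\|_A\|S\|_A$, so every rotate $e^{-i\alpha}\overline{\lambda_0}$ still has real part $\ge-\varepsilon\|T\|_A\|S\|_A$. If one prefers to keep the argument self-contained, the \emph{only if} direction can instead be recovered by repeating, for $\mu=e^{i\alpha}$, the subsequence-extraction step from the proof of (i)$\implies$(ii) in Theorem~\ref{Theorem: 1}, together with Lemmas~\ref{Lemma: 1} and~\ref{Lemma: 2}; the route through Theorem~\ref{Theorem: 1} is simply shorter.
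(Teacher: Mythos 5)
Your proof is correct, and the two directions split as follows. The \emph{if} direction is essentially identical to the paper's: the same expansion of $\|Tx_n^\alpha+\mu Sx_n^\alpha\|_A^2$, dropping the $|\mu|^2$ term and passing to the limit. The \emph{only if} direction is where you genuinely diverge. The paper re-runs the direct argument from scratch: for each $\alpha$ it perturbs by $e^{i\alpha}S/n$, picks near-norm-attaining vectors $x_n^\alpha$, and extracts a subsequence along which both (i) and (ii) hold --- essentially repeating the (i)$\implies$(ii) machinery of Theorem~\ref{Theorem: 1} without ever mentioning $W_A(T,S)$. You instead pull a point $\lambda_0\in W_A(T,S)\cap B(0,\varepsilon\|T\|_A\|S\|_A)$ out of Theorem~\ref{Theorem: 1}, take the single sequence witnessing $\lambda_0$, and observe that $\R\big(e^{-i\alpha}\overline{\lambda_0}\big)\ge -|\lambda_0|\ge -\varepsilon\|T\|_A\|S\|_A$ uniformly in $\alpha$. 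This is shorter, correctly handles the order-of-arguments discrepancy via conjugate symmetry of $\langle\cdot,\cdot\rangle_A$ (a point the paper glosses over), and in fact proves a slightly stronger statement --- one sequence serving all angles $\alpha$ simultaneously --- whereas the paper's formulation and proof allow the sequence to depend on $\alpha$. The trade-off is that your route inherits the full dependency chain of Theorem~\ref{Theorem: 1} (Lemmas~\ref{Lemma: 1} and~\ref{Lemma: 2}, Carath\'eodory), while the paper's version of this direction is self-contained modulo the definition of $\|\cdot\|_A$ as a supremum. Both are valid; there is no circularity since Theorem~\ref{Theorem: 1} is established independently.
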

\begin{proof}
Suppose that (i) and (ii) hold. Let $\lambda=|\lambda|e^{i\alpha}\in \mathbb{C}$ with $\alpha \in [0, 2\pi)$. Thus,
\begin{eqnarray}
\|T+\lambda S\|_A^2&\geq &\lim\limits_{n\to \infty}\|(T+\lambda S)x_n^{\alpha}\|_A^2\nonumber \\
&=&\lim\limits_{n\to \infty}(\|Tx_n^{\alpha}\|_A^2+2|\lambda|\R(e^{-i\alpha}\langle Tx_n^{\alpha}, Sx_n^{\alpha}  \rangle_A)+|\lambda|^2\|Sx_n^{\alpha}\|_A^2)\nonumber \\
&\geq& \lim\limits_{n\to \infty}(\|Tx_n^{\alpha}\|_A^2+2|\lambda|\R(e^{-i\alpha}\langle Tx_n^{\alpha}, Sx_n^{\alpha}  \rangle_A)\nonumber\\
&\geq& \|T\|_A^2-2\varepsilon\|T\|_A\|\lambda S\|_A.\nonumber\
\end{eqnarray}
Therefore, $T\bot^B_{A,\varepsilon} S$.
	
Now, we assume that $T\bot^B_{A,\varepsilon} S$ for some $\varepsilon\in [0, 1)$.  Then for any $\alpha \in [0, 2\pi)$, we have $\left\|T+\frac{e^{i\alpha}}{n} S\right\|_A^2\geq \|T\|_A^2-2\varepsilon\|T\|_A\left\|\frac{e^{i\alpha}}{n} S\right\|_A$,
for all $n\in \mathbb{N}$.
	
 Thus, for any $n\in \mathbb{N}$ there exists $x_n^{\alpha}\in \mathbb{S}_\mathcal{H}^A$ such that
\begin{equation*}
\left\|T+\frac{e^{i\alpha}}{n} S\right\|_A^2-\frac{1}{n^2}\leq \left\|\left(T+\frac{e^{i\alpha}}{n} S\right)x_n^{\alpha}\right\|_A^2,
\end{equation*}
 Consequently,
	\begin{eqnarray}\label{normapp}
	\|T\|_A^2&-&\frac{2\varepsilon}{n}\|T\|_A\| S\|_A-\frac{1}{n^2}\leq\left\|T+\frac{e^{i\alpha}}{n
	} S\right\|_A^2-\frac{1}{n^2}\leq\left\|\left(T+\frac{e^{i\alpha}}{n} S\right)x_n^{\alpha}\right\|_A^2\nonumber \\
	&=&\|Tx_n^{\alpha}\|_A^2+\frac2n \R(e^{-i\alpha}\langle Tx_n^{\alpha}, Sx_n^{\alpha}  \rangle_A)+\frac{1}{n^2}\|Sx_n^{\alpha}\|_A^2.  \
	\end{eqnarray}
	Therefore, for all $n\in \mathbb{N}$ we have
	\begin{eqnarray}
	\frac{n}{2}(\|T\|_A^2-\|Tx_n^{\alpha}\|_A^2)\leq \R(e^{-i\alpha}\langle Tx_n^{\alpha}, Sx_n^{\alpha}  \rangle_A)+\frac{1}{2n}\|S\|_A^2+\frac{1}{2n}+\varepsilon \|T\|_A \|S\|_A,\nonumber \
	\end{eqnarray}
	and thus
	\begin{eqnarray}
	0\leq \R(e^{-i\alpha}\langle Tx_n^{\alpha}, Sx_n^{\alpha}  \rangle_A)+\frac{1}{2n}\|S\|_A^2+\frac{1}{2n}+\varepsilon \|T\|_A\|S\|_A. \nonumber \
	\end{eqnarray}
	
 Now, since the sequence $\{\langle Tx_n^{\alpha}, Sx_n^{\alpha}  \rangle_A\}_{n\in \mathbb{N}}$ is bounded in the complex plane,
		 there exists a subsequence $\{\langle Tx_{n_k}^{\alpha}, Sx_{n_k}^{\alpha}  \rangle_A\}_{k\in \mathbb{N}}$  such that
		 \begin{eqnarray*}
	\lim\limits_{k\to \infty}\R(e^{-i\alpha}\langle Tx_{n_k}^{\alpha}, Sx_{n_k}^{\alpha}  \rangle_A)\geq -\varepsilon \|T\|_A \|S\|_A,
	\end{eqnarray*}
	which proves (ii).
	
	Now we shall prove (i). It follows from \eqref{normapp} that for  every natural number $k$,
	\begin{eqnarray}
	\|T\|_A^2&\geq& \|Tx_{n_k}^{\alpha}\|_A^2  \nonumber\\
	&\geq&\|T\|_A^2-\frac{2\varepsilon}{{n_k}}\|T\|_A\| S\|_A-\frac{1}{{n_k}^2}-\frac2{n_k} \R(e^{-i\alpha}\langle Tx_{n_k}^{\alpha}, Sx_{n_k}^{\alpha}  \rangle_A)-\frac{1}{{n_k}^2}\|Sx_{n_k}^{\alpha}\|_A^2\nonumber\\
	&\geq&\|T\|_A^2-\frac{2\varepsilon}{{n_k}}\|T\|_A\| S\|_A-\frac{1}{{n_k}^2}-\frac2{n_k} \|T\|_A \|S\|_A-\frac{1}{{n_k}^2}\|S\|_A^2. \nonumber \
	\end{eqnarray}
	Therefore, $\lim\limits_{k\to \infty}\|Tx_{n_k}^{\alpha}\|_A=\|T\|_A$ and this completes the proof.
\end{proof}

\begin{proposition}
	$T, S\in \mathbb{B}_{A^{1/2}}(\mathcal{H})$ such that  $|\langle Tx_0, Sx_0\rangle_A|\leq \varepsilon \|T\|_A\|S\|_A$ for some $\varepsilon \in [0, 1)$ and $x_0\in \mathbb{M}_T^A\subseteq \mathbb{M}_S^A ,$ then $T\bot^B_{A,\varepsilon} S$ and $S\bot^B_{A,\varepsilon} T.$
\end{proposition}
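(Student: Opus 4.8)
The plan is to verify the defining inequality of $(\varepsilon,A)$-approximate orthogonality directly by testing at the single norm-attaining vector $x_0$, exactly as in the displayed computation preceding Proposition~\ref{rangesort}'s strengthened version. First I would fix $\lambda=|\lambda|e^{i\alpha}\in\mathbb{C}$ and write
\[
\|T+\lambda S\|_A^2\ge \|(T+\lambda S)x_0\|_A^2
=\|Tx_0\|_A^2+2|\lambda|\,\R\!\big(e^{-i\alpha}\langle Tx_0,Sx_0\rangle_A\big)+|\lambda|^2\|Sx_0\|_A^2 .
\]
Then I would drop the nonnegative term $|\lambda|^2\|Sx_0\|_A^2$ and bound the cross term from below using $\R(e^{-i\alpha}\langle Tx_0,Sx_0\rangle_A)\ge -|\langle Tx_0,Sx_0\rangle_A|\ge -\varepsilon\|T\|_A\|S\|_A$, together with $\|Tx_0\|_A=\|T\|_A$ (since $x_0\in\mathbb{M}_T^A$), to get $\|T+\lambda S\|_A^2\ge \|T\|_A^2-2\varepsilon\|T\|_A\|\lambda S\|_A$. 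This is precisely Definition~\ref{appnorm}, so $T\bot^B_{A,\varepsilon}S$.

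For the symmetric conclusion $S\bot^B_{A,\varepsilon}T$, I would run the same argument but now using the hypothesis $x_0\in\mathbb{M}_S^A$, i.e.\ $\|Sx_0\|_A=\|S\|_A$, which is guaranteed by the inclusion $\mathbb{M}_T^A\subseteq\mathbb{M}_S^A$. Starting from $\|S+\lambda T\|_A^2\ge\|(S+\lambda T)x_0\|_A^2=\|Sx_0\|_A^2+2|\lambda|\,\R(e^{-i\alpha}\langle Sx_0,Tx_0\rangle_A)+|\lambda|^2\|Tx_0\|_A^2$, discarding the last term and using $|\langle Sx_0,Tx_0\rangle_A|=|\langle Tx_0,Sx_0\rangle_A|\le\varepsilon\|T\|_A\|S\|_A=\varepsilon\|S\|_A\|T\|_A$, one obtains $\|S+\lambda T\|_A^2\ge\|S\|_A^2-2\varepsilon\|S\|_A\|\lambda T\|_A$, hence $S\bot^B_{A,\varepsilon}T$.

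There is essentially no obstacle here: the statement is a routine consequence of the norm-attainment hypotheses, and the proof is a two-line estimate repeated with the roles of $T$ and $S$ interchanged. The only point worth stating explicitly is why $\mathbb{M}_T^A$ (and hence $\mathbb{M}_S^A$) is assumed nonempty via the chosen $x_0$, so that the supremum defining $\|T\|_A$ and $\|S\|_A$ is actually attained at the common vector $x_0$; this is exactly what makes the single-vector test legitimate, in the same spirit as the remark following Proposition~\ref{rangesort}.
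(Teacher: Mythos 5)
Your proof is correct, but it follows a genuinely different route from the paper's. The paper deduces the proposition from Theorem \ref{Theorem: 1}: since $x_0\in\mathbb{M}_T^A$, the constant sequence $x_n=x_0$ shows that $\langle Sx_0,Tx_0\rangle_A\in W_A(T,S)$, and the hypothesis $|\langle Tx_0,Sx_0\rangle_A|\le\varepsilon\|T\|_A\|S\|_A$ places this point in the closed disc $B(0,\varepsilon\|T\|_A\|S\|_A)$, so $W_A(T,S)\cap B(0,\varepsilon\|T\|_A\|S\|_A)\neq\emptyset$ and the characterization yields $T\bot^B_{A,\varepsilon}S$; the symmetric statement is obtained the same way. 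You instead verify Definition \ref{appnorm} directly by expanding $\|(T+\lambda S)x_0\|_A^2$, discarding the nonnegative quadratic term, and bounding the cross term via $\R\big(e^{-i\alpha}\langle Tx_0,Sx_0\rangle_A\big)\ge-|\langle Tx_0,Sx_0\rangle_A|$ --- essentially the single-vector computation displayed just before the strengthened version of Proposition \ref{rangesort}. Your argument is more elementary, as it bypasses the (nontrivial) characterization theorem entirely, while the paper's version is shorter once Theorem \ref{Theorem: 1} is in hand and isolates the only relevant datum, namely that $\langle Sx_0,Tx_0\rangle_A$ lies in the disc. Both arguments use the inclusion $\mathbb{M}_T^A\subseteq\mathbb{M}_S^A$ only for the second conclusion $S\bot^B_{A,\varepsilon}T$ (for the first, $\|Sx_0\|_A\le\|S\|_A$ would suffice), and your explicit remark that the existence of $x_0$ is what legitimizes testing at a single vector is accurate. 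No gaps.
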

\begin{proof} By the hypothesis, we have that $\|Tx_0\|_A=\|T\|_A$ and $\|Sx_0\|_A=\|S\|_A.$
		Then, we get
		\[\langle Sx_0, Tx_0\rangle_A \in W_A(T, S)\qquad {\rm and}\qquad |\langle Tx_0, Sx_0\rangle_A|\leq \varepsilon\|T\|_A\|S\|_A.\]
		Therefore, we conclude that $\langle Sx_0, Tx_0\rangle_A \in W_A(T, S)\cap B(0,\varepsilon\|T\|_A\|S\|_A)$ and by Theorem \ref{Theorem: 1}  we obtain $T\bot^B_{A,\varepsilon} S$. In the similar way, $S\bot^B_{A,\varepsilon} T.$
		\end{proof}

\begin{corollary}
Let $S\in \mathbb{B}_{A^{1/2}}(\mathcal{H})$ and $T \in \mathbb{B}_{A}(\mathcal{H})$ such that $\omega_A(T^{\sharp_A}S)<\|S\|_A\|T\|_A,$ then $T\bot^B_{A,\varepsilon} S$ and $S\bot^B_{A,\varepsilon} T$ for some $\varepsilon\in [0,1)$.
\end{corollary}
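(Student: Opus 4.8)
The plan is to deduce this corollary directly from the preceding proposition by producing a single norm-attaining vector at which the relevant $A$-inner product is small. First I would recall that $T \in \mathbb{B}_A(\mathcal{H})$ guarantees the existence of the reduced solution $T^{\sharp_A}$, and that $\|T^{\sharp_A} S\|_A \le \|T^{\sharp_A}\|_A \|S\|_A = \|T\|_A \|S\|_A$ since $\|T^{\sharp_A}\|_A = \|T\|_A$. The hypothesis $\omega_A(T^{\sharp_A} S) < \|S\|_A \|T\|_A$ is therefore a strict inequality between two finite quantities, and I would set $\varepsilon := \omega_A(T^{\sharp_A} S) / (\|S\|_A \|T\|_A)$, which lies in $[0,1)$ (the degenerate cases $\|T\|_A = 0$ or $\|S\|_A = 0$ force $T = 0$ or $S = 0$ and are handled trivially, or excluded since then orthogonality is automatic). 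The point of the exponent: for every $x \in \mathbb{S}_\mathcal{H}^A$ one has $|\langle Tx, Sx\rangle_A| = |\langle AT x, Sx\rangle| = |\langle x, T^{\sharp_A} S x\rangle_A| \le \omega_A(T^{\sharp_A} S) = \varepsilon \|T\|_A \|S\|_A$, where I use $AT^{\sharp_A} = (T)^* A$ rewritten as $\langle ATx, y\rangle = \langle x, AT^{\sharp_A} y\rangle$ — wait, I should be careful about which side the sharp lands on; the correct identity is $\langle Tx, Sx\rangle_A = \langle ATx, Sx\rangle = \langle Tx, A S x\rangle$ and then $\langle Tx, Sx\rangle_A = \langle x, T^{\sharp_A}(Sx)\rangle_A = \langle (T^{\sharp_A} S)x, x\rangle_A$ up to a conjugate, so in modulus it is bounded by $\omega_A(T^{\sharp_A}S)$.

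Next I would address the hypothesis of the preceding proposition, which requires a vector $x_0 \in \mathbb{M}_T^A \subseteq \mathbb{M}_S^A$. This is the gap that needs care: the corollary as stated does not obviously supply such a vector, since $T$ need not attain its $A$-seminorm at all, and even if it does, the norm-attainment set of $T$ need not be contained in that of $S$. So the honest route is \emph{not} through that proposition but through Theorem~\ref{Theorem: 1} directly: I would show $W_A(T,S) \cap B(0, \varepsilon \|T\|_A\|S\|_A) \neq \emptyset$ — in fact I claim $W_A(T,S) \subseteq B(0, \varepsilon\|T\|_A\|S\|_A)$ entirely. Indeed, take any $\lambda \in W_A(T,S)$: by definition there is a sequence $\{x_n\} \subseteq \mathbb{S}_\mathcal{H}^A$ with $\|Tx_n\|_A \to \|T\|_A$ and $\langle Sx_n, Tx_n\rangle_A \to \lambda$. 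For each $n$, $|\langle Sx_n, Tx_n\rangle_A| = |\langle (T^{\sharp_A}S) x_n, x_n\rangle_A| \le \omega_A(T^{\sharp_A}S) = \varepsilon\|T\|_A\|S\|_A$ by the computation above, so passing to the limit $|\lambda| \le \varepsilon\|T\|_A\|S\|_A$. Since $W_A(T,S)$ is nonempty by Lemma~\ref{Lemma: 1}, we get $W_A(T,S) \cap B(0,\varepsilon\|T\|_A\|S\|_A) = W_A(T,S) \neq \emptyset$, and Theorem~\ref{Theorem: 1} gives $T \perp_{A,\varepsilon}^B S$.

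For the symmetric statement $S \perp_{A,\varepsilon}^B T$, I would run the analogous argument with the roles of $T$ and $S$ swapped, checking $W_A(S,T) \subseteq B(0, \varepsilon\|S\|_A\|T\|_A)$; here for $\lambda \in W_A(S,T)$ and the witnessing sequence, $|\langle Tx_n, Sx_n\rangle_A|$ is again $|\langle (T^{\sharp_A} S) x_n, x_n\rangle_A|$ up to conjugation (or one uses that $S \in \mathbb{B}_{A^{1/2}}$ but may not lie in $\mathbb{B}_A$, so one should phrase the bound using $T^{\sharp_A}$ rather than $S^{\sharp_A}$, which is legitimate since $\langle Tx, Sx\rangle_A = \overline{\langle Sx, Tx\rangle_A}$ and the estimate only uses $\langle (T^{\sharp_A}S)x, x\rangle_A$). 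Thus the same $\varepsilon$ works for both. The main obstacle is precisely this bookkeeping step: making sure the numerical-radius bound $|\langle Tx, Sx\rangle_A| \le \omega_A(T^{\sharp_A}S)$ is invoked correctly for an $A$-bounded (not necessarily $A$-adjointable) operator $S$, and correctly identifying $\varepsilon$ so that it is genuinely strictly below $1$; once that is in place, everything reduces to Lemma~\ref{Lemma: 1} and Theorem~\ref{Theorem: 1}.
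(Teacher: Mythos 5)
Your proposal is correct and follows essentially the same route as the paper: define $\varepsilon=\omega_A(T^{\sharp_A}S)/(\|T\|_A\|S\|_A)\in[0,1)$, use $\langle Tx,Sx\rangle_A=\langle x,T^{\sharp_A}Sx\rangle_A$ to bound $|\langle Tx,Sx\rangle_A|$ by $\varepsilon\|T\|_A\|S\|_A$ on the $A$-unit sphere, and conclude via Theorem~\ref{Theorem: 1} that $W_A(T,S)\cap B(0,\varepsilon\|T\|_A\|S\|_A)\neq\emptyset$ (the paper builds one element of $W_A(T,S)$ from a norming sequence, whereas you observe the whole set lies in the ball and invoke Lemma~\ref{Lemma: 1} for nonemptiness — an immaterial difference). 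Your side remarks on why the $\mathbb{M}_T^A$-based proposition cannot be used and on handling $S\notin\mathbb{B}_A(\mathcal{H})$ via conjugation are accurate and consistent with what the paper does implicitly.
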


\begin{proof}By the hypothesis, there exists $\varepsilon \in [0, 1)$ such that $\omega_A(S^{\sharp_A}T)=\varepsilon \|S\|_A\|T\|_A.$ Then,
		for any $x\in \mathcal{H}$ with $\|x\|_A=1$
		\begin{align*}
		 |\langle Tx, Sx  \rangle_A|=|\langle x,T^{\sharp_A}S x  \rangle_A)|\leq \varepsilon\|S\|_A\|T\|_A.
		\end{align*}
		Let $\{x_n\}$  be a sequence in $\mathbb{S}_\mathcal{H}^A$ with $\lim\limits_{n\to \infty}\|Tx_n\|_A=\|T\|_A$.  Now, since the sequence $\{\langle Tx_n, Sx_n  \rangle_A\}_{n\in \mathbb{N}}$ is bounded in the complex plane,  passing through a suitable subsequence, if necessary, we may assume that
		\[\lim\limits_{k\to \infty}\langle Tx_{n_k}, Sx_{n_k} \rangle_A=\lambda,\]
  for some $\lambda \in \mathbb{C}$. Thus, $\bar{\lambda}\in W_A(T, S)\cap B(0,\varepsilon\|T\|_A\|S\|_A)$. Consequently, Theorem \ref{Theorem: 1} ensures that $T\bot^B_{A,\varepsilon} S$. In a similar way, we obtain that  $S\bot^B_{A,\varepsilon} T.$
		\end{proof}
	
 It follows from the inequality \eqref{refine1} that $\omega_A(T) \leq \|T\|_A$ for $T\in \mathbb{B}_{A^{1/2}}(\mathcal{H})$. The next result illustrates that when such inequality is strict we have $(\varepsilon, A)$-approximate orthogonality between $T$ and the identity operator.

\begin{corollary}\label{omegastrictnorm}
	Let $T\in \mathbb{B}_{A^{1/2}}(\mathcal{H})$ such that $\omega_A(T)<\|T\|_A,$ then $T\bot^B_{A,\varepsilon} I$ and $I\bot^B_{A,\varepsilon} T$.
\end{corollary}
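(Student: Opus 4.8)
The plan is to deduce Corollary~\ref{omegastrictnorm} directly from the previous corollary by taking $S = I$. First I would observe that $I \in \mathbb{B}_A(\mathcal{H})$ (indeed $I$ admits the $A$-adjoint $I$ itself, since $AI = I^*A$), so the roles required in the preceding corollary are met: we may put $T = I$ in the role of the operator in $\mathbb{B}_A(\mathcal{H})$, and the given $T$ of the present statement in the role of the $A$-bounded operator. Then $I^{\sharp_A} = P_A$, and one computes $\omega_A\big(I^{\sharp_A} T\big) = \omega_A(P_A T)$. The key identity to record is $\omega_A(P_A T) = \omega_A(T)$, which follows from $\langle P_A T x, x\rangle_A = \langle A P_A T x, x\rangle = \langle A T x, x\rangle = \langle T x, x\rangle_A$ for all $x$ (using $A P_A = A$), so the two $A$-numerical radii are suprema of the same quantity over $\mathbb{S}_{\mathcal{H}}^A$.

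Next I would invoke the hypothesis $\omega_A(T) < \|T\|_A = \|T\|_A \|I\|_A$, noting that $\|I\|_A = 1$ (this is immediate, as $\|Ix\|_A = \|x\|_A$). Combining with the identity above gives $\omega_A(I^{\sharp_A} T) = \omega_A(T) < \|T\|_A \|I\|_A$, which is exactly the hypothesis of the preceding corollary with the pair $(I, T)$ in place of $(T, S)$ there (one must simply match up the notation: that corollary has $S \in \mathbb{B}_{A^{1/2}}(\mathcal{H})$, $T \in \mathbb{B}_A(\mathcal{H})$, and hypothesis $\omega_A(T^{\sharp_A} S) < \|S\|_A \|T\|_A$; here $S \leftrightarrow T$ and $T \leftrightarrow I$). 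Applying it yields $I \bot^B_{A,\varepsilon} T$ and $T \bot^B_{A,\varepsilon} I$ for some $\varepsilon \in [0,1)$, which is the assertion.

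The only genuinely delicate point is the bookkeeping of which operator plays which role, because the statement of the corollary we are citing is itself written with the $\mathbb{B}_A$-operator called $T$ and uses $T^{\sharp_A}S$ whereas the present statement's distinguished operator is $A$-bounded and called $T$; I would state the substitution explicitly to avoid confusion, and I would also make sure $\varepsilon$ can be taken in $[0,1)$ rather than $[0,1]$ — but this is guaranteed because strict inequality $\omega_A(T) < \|T\|_A$ forces the ratio $\omega_A(T)/\|T\|_A$ (when $\|T\|_A \neq 0$) to lie in $[0,1)$, and the degenerate case $\|T\|_A = 0$ cannot occur under a strict inequality $\omega_A(T) < \|T\|_A$. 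No new estimate is needed; everything reduces to the two elementary identities $\omega_A(P_A T) = \omega_A(T)$ and $\|I\|_A = 1$ together with the already-proved corollary.

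\begin{proof}
Apply the preceding corollary with the pair $(I, T)$ in place of $(T, S)$. This is legitimate: $I \in \mathbb{B}_A(\mathcal{H})$ since $AI = I^*A$, so $I$ admits an $A$-adjoint, and $I^{\sharp_A} = P_A$ because $AP_A = A = I^*A$ and $\mathcal{R}(P_A) = \overline{\mathcal{R}(A)}$. Also $T \in \mathbb{B}_{A^{1/2}}(\mathcal{H})$ by hypothesis, and $\|I\|_A = \sup_{x \in \mathbb{S}_\mathcal{H}^A} \|Ix\|_A = \sup_{x \in \mathbb{S}_\mathcal{H}^A} \|x\|_A = 1$.

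Using $AP_A = A$, for every $x \in \mathcal{H}$ we have
\begin{align*}
\langle P_A T x, x\rangle_A = \langle AP_A T x, x\rangle = \langle A T x, x\rangle = \langle T x, x\rangle_A,
\end{align*}
hence, taking the supremum of the moduli over $x \in \mathbb{S}_\mathcal{H}^A$,
\begin{align*}
\omega_A\big(I^{\sharp_A} T\big) = \omega_A(P_A T) = \omega_A(T).
\end{align*}
By hypothesis, $\omega_A(T) < \|T\|_A = \|T\|_A \|I\|_A$, so $\omega_A\big(I^{\sharp_A} T\big) < \|T\|_A \|I\|_A$. This is precisely the assumption of the preceding corollary for the pair $(I, T)$, which therefore yields $I \bot^B_{A,\varepsilon} T$ and $T \bot^B_{A,\varepsilon} I$ for some $\varepsilon \in [0,1)$.
\end{proof}
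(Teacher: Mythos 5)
Your proof is correct and follows exactly the route the paper intends: the corollary is stated without proof immediately after the result on $\omega_A(T^{\sharp_A}S)<\|S\|_A\|T\|_A$, and is meant to be the specialization obtained by putting the identity in the role of the $\mathbb{B}_A(\mathcal{H})$-operator. Your verifications that $I^{\sharp_A}=P_A$, $\omega_A(P_AT)=\omega_A(T)$ (via $AP_A=A$), and $\|I\|_A=1$ supply precisely the bookkeeping the paper leaves implicit.
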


\begin{remark}
	The converse of Proposition \ref{omegastrictnorm} is false. Consider $A=I$ and $T=\begin{pmatrix}
	1 & 0\\
	0 & 0
	\end{pmatrix}$. As $T$ is a normal operator, it is well-known that $\omega(T)=\|T\|=1$. On the other hand, for any $\varepsilon \in [0, 1),$ we have that $\frac{\varepsilon}{2}\in W_I(I, T)$. Indeed, if we consider  $x_0=\begin{pmatrix}
	\sqrt{\frac{\varepsilon}{2}} \\
\sqrt{1-\frac{\varepsilon}{2}}
	\end{pmatrix}\in \mathbb{S}_\mathcal{H}^I, $ and $\langle Tx_0, x_0\rangle=\frac{\varepsilon}{2}.$ Then, we conclude that $ W_I(I, T)\cap B(0,\varepsilon)\neq \emptyset$ and by Theorem \ref{Theorem: 1}, we get $I\bot^B_{I,\varepsilon} T$.
\end{remark}
\begin{corollary}
Let	$T, S\in \mathbb{B}_{A^{1/2}}(\mathcal{H})$ such that $x_0\in \mathbb{M}_T^A$ and $\|Sx_0\|_A\leq \varepsilon\|S\|_A$ for some $\varepsilon \in [0,1)$ then $T\bot^B_{A,\varepsilon} S$.
\end{corollary}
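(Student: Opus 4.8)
The plan is to deduce this from Theorem~\ref{Theorem: 1} by producing an explicit element of $W_A(T,S)$ lying in the disc $B(0,\varepsilon\|T\|_A\|S\|_A)$. Since $x_0\in\mathbb{M}_T^A$, the constant sequence $x_n:=x_0$ lies in $\mathbb{S}_\mathcal{H}^A$ and satisfies $\|Tx_n\|_A=\|Tx_0\|_A=\|T\|_A$ for every $n\in\mathbb{N}$, so by the very definition of $W_A(T,S)$ we have $\langle Sx_0,Tx_0\rangle_A\in W_A(T,S)$.

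Next I would estimate this number: by the Cauchy--Schwarz inequality for the semi-inner product $\langle\cdot,\cdot\rangle_A$, together with the two hypotheses $\|Tx_0\|_A=\|T\|_A$ and $\|Sx_0\|_A\le\varepsilon\|S\|_A$,
\[|\langle Sx_0,Tx_0\rangle_A|\ \le\ \|Sx_0\|_A\,\|Tx_0\|_A\ \le\ \varepsilon\,\|S\|_A\,\|T\|_A,\]
so $\langle Sx_0,Tx_0\rangle_A\in B(0,\varepsilon\|T\|_A\|S\|_A)$. Combining the two observations gives $W_A(T,S)\cap B(0,\varepsilon\|T\|_A\|S\|_A)\neq\emptyset$, and Theorem~\ref{Theorem: 1} yields $T\bot^B_{A,\varepsilon}S$.

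Alternatively, one can bypass Theorem~\ref{Theorem: 1} and argue directly: for every $\lambda\in\mathbb{C}$,
\[\|T+\lambda S\|_A^2\ \ge\ \|(T+\lambda S)x_0\|_A^2\ =\ \|Tx_0\|_A^2+2\,\R\bigl(\bar\lambda\,\langle Tx_0,Sx_0\rangle_A\bigr)+|\lambda|^2\|Sx_0\|_A^2;\]
discarding the nonnegative last term and bounding the middle one by $-2|\lambda|\,\|Tx_0\|_A\|Sx_0\|_A\ge-2\varepsilon|\lambda|\,\|T\|_A\|S\|_A$ via Cauchy--Schwarz and the two hypotheses exactly as above gives $\|T+\lambda S\|_A^2\ge\|T\|_A^2-2\varepsilon\|T\|_A\|\lambda S\|_A$, i.e.\ $T\bot^B_{A,\varepsilon}S$.

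I do not expect any genuine obstacle here: the statement is essentially a one-line corollary once the right witness is identified. The only point worth flagging is that it is the quantitative hypothesis $\|Sx_0\|_A\le\varepsilon\|S\|_A$ --- and not merely $x_0$ being a norm-attaining vector for $S$ --- that turns the Cauchy--Schwarz estimate into a bound in terms of $\|S\|_A$, which is precisely what forces $\langle Sx_0,Tx_0\rangle_A$ into the disc of radius $\varepsilon\|T\|_A\|S\|_A$.
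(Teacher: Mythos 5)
Your proposal is correct and its main argument is exactly the paper's: exhibit $\langle Sx_0,Tx_0\rangle_A$ as a member of $W_A(T,S)$ via the constant sequence, bound it by Cauchy--Schwarz using $\|Tx_0\|_A=\|T\|_A$ and $\|Sx_0\|_A\le\varepsilon\|S\|_A$, and invoke Theorem~\ref{Theorem: 1}. The direct alternative you sketch is also valid but is not needed.
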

\begin{proof}
	By the hyphotesis, we have that $\|Tx_0\|_A=\|T\|_A$ and $\|Sx_0\|_A\leq \varepsilon\|S\|_A$ for some $\varepsilon \in [0,1).$ Then, \[|\langle Tx_0, Sx_0  \rangle_A|\leq \|Tx_0\|_A\|Sx_0\|_A\nonumber\\
	\leq \varepsilon\|T\|_A\|S\|_A,\]
		this means that $\langle  Sx_0, Tx_0,  \rangle_A\in W_A(T, S)\cap B(0,\varepsilon\|T\|_A\|S\|_A)$,  and consequently by Theorem \ref{Theorem: 1}, we obtain $T\bot^B_{A,\varepsilon} S$.
	\end{proof}


\section{More results on approximate $A$-numerical radius orthogonality}\label{s3}
In this section, our objective is to further explore the recent study initiated in \cite{sen} regarding the concept of approximate numerical radius orthogonality and enhance a result presented therein. Let's begin by revisiting the following definition from \cite{sen}.
\begin{definition}\label{omegaorthogonality}
Let $\varepsilon \in [0,1).$ We say that an operator $T\in \mathbb{B}_{A^{1/2}}(\mathcal{H})$ is approximate $(\varepsilon,A)$-numerical radius orthogonal to another element $S\in \mathbb{B}_{A^{1/2}}(\mathcal{H})$ and we write $T\perp^{\varepsilon}_{\omega_A} S$ if
$$\omega_A^2(T+\lambda S)\geq \omega_A^2(T)-2\varepsilon \omega_A(T) \omega_A(\lambda S) \,\, \text{for all }\lambda\in\mathbb{C}.$$
\end{definition}

It is easy to see that the previous notion is homogeneous, i.e. $T\perp^{\varepsilon}_{\omega_A} S$ and $\alpha T\perp^{\varepsilon}_{\omega_A} \beta S$ are equivalent for any $\alpha, \beta \in \mathbb{C}.$ With a proof similar to the one made in Proposition \ref{linearlyind}, we can affirm that if $T\perp^{\varepsilon}_{\omega_A} S,$ then $T, S$ are linearly independent. Next, we give a concrete example of this new notion.
\begin{example}\label{example2}
		Let $\mathcal{H}, A, T$ and $S$  be defined as in Example \ref{example1}. Obviously, $\omega_A(T)=\alpha=\langle Te_1, e_1\rangle_A$ and $\omega_A(S)=1=\langle Te_2, e_2\rangle_A.$
		 \\
		Then, for any $\lambda=|\lambda|e^{i\alpha}\in \mathbb{C}$, we have
		\begin{eqnarray}
		\omega_A^2(T+\lambda S)&\geq &|\langle(T+\lambda S)e_1, e_1\rangle_A|^2\nonumber \\
		&=&|(\alpha+\lambda \varepsilon, 0, 0, \cdots)|^2=|\alpha+\lambda \varepsilon|^2\nonumber\\
		&\geq&(\alpha+\varepsilon|\lambda|\cos(\alpha))^2\nonumber\\
			&\geq&\alpha^2+2\alpha\varepsilon|\lambda|\cos(\alpha)\nonumber\\
		&\geq & \alpha^2-2\alpha\varepsilon|\lambda|=\omega_A^2(T)-2\varepsilon \omega_A(T) \omega_A(\lambda S),\nonumber\
		\end{eqnarray}
		so by Definition \ref{omegaorthogonality}, we conclude that  $T\perp^{\varepsilon}_{\omega_A} S$.
\end{example}

The following theorem, which provides characterizations of approximate $A$-numerical radius orthogonality for $A$-bounded operators, has been proven in \cite{sen}.
\begin{theorem}\label{thms3}
Let $T,S\in \mathbb{B}_{A^{1/2}}(\mathcal{H})$ and $\varepsilon\in [0,1)$. Then the following assertions are equivalent:
\begin{itemize}
  \item [(1)] $T\perp^{\varepsilon}_{\omega_A} S$
    \item [(2)] For each $\alpha\in [0,2\pi)$, there exists a sequence $\{x_k^{\alpha}\}$ in $\mathbb{S}_\mathcal{H}^A$ such that
  \begin{itemize}
    \item [(i)] $\displaystyle\lim_{k\to \infty} |\langle Tx^{\alpha}_k, x^{\alpha}_k\rangle_A| =\omega_A(T)$
    \item [(ii)] $\displaystyle\lim_{k\to \infty} \R\Big(e^{-i\alpha} \langle Tx^{\alpha}_k, x^{\alpha}_k\rangle_A\langle x^{\alpha}_k, Sx^{\alpha}_k\rangle_A\Big)\geq -\varepsilon \omega_A(T)\omega_A(S)$.
  \end{itemize}
\end{itemize}
\end{theorem}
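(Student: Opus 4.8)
The plan is to mirror the structure of the proof of \Cref{apporthobh}, replacing the seminorm $\|\cdot\|_A$ by the $A$-numerical radius $\omega_A(\cdot)$ and the quantity $\langle Tx_n^\alpha, Sx_n^\alpha\rangle_A$ by the product $\langle Tx_k^\alpha, x_k^\alpha\rangle_A\langle x_k^\alpha, Sx_k^\alpha\rangle_A$. This substitution is natural because $\omega_A(R)=\sup\{|\langle Rx,x\rangle_A|:x\in\mathbb{S}_\mathcal{H}^A\}$ is itself a supremum of the ``$A$-numerical form'' $x\mapsto\langle Rx,x\rangle_A$ over the $A$-unit sphere, exactly as $\|R\|_A=\sup\{\|Rx\|_A:x\in\mathbb{S}_\mathcal{H}^A\}$; the role played by $\|Rx\|_A^2$ in one setting is played by $|\langle Rx,x\rangle_A|^2$ in the other, and the role of the polarization cross term $\R(e^{-i\alpha}\langle Tx,Sx\rangle_A)$ is played by $\R\!\big(e^{-i\alpha}\langle Tx,x\rangle_A\langle x,Sx\rangle_A\big)$, which arises from expanding $|\langle(T+\lambda S)x,x\rangle_A|^2$.

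For the direction $(2)\Rightarrow(1)$, I would fix $\lambda=|\lambda|e^{i\alpha}\in\mathbb{C}\setminus\{0\}$, take the sequence $\{x_k^\alpha\}$ furnished by (2), and compute
\begin{align*}
\omega_A^2(T+\lambda S)&\geq \lim_{k\to\infty}|\langle(T+\lambda S)x_k^\alpha,x_k^\alpha\rangle_A|^2\\
&=\lim_{k\to\infty}\Big(|\langle Tx_k^\alpha,x_k^\alpha\rangle_A|^2+2|\lambda|\,\R\big(e^{-i\alpha}\langle Tx_k^\alpha,x_k^\alpha\rangle_A\langle x_k^\alpha,Sx_k^\alpha\rangle_A\big)+|\lambda|^2|\langle Sx_k^\alpha,x_k^\alpha\rangle_A|^2\Big)\\
&\geq \omega_A^2(T)-2\varepsilon|\lambda|\,\omega_A(T)\omega_A(S),
\end{align*}
where the last step uses (i), (ii), and $|\lambda|^2|\langle Sx_k^\alpha,x_k^\alpha\rangle_A|^2\geq 0$, together with $\omega_A(\lambda S)=|\lambda|\omega_A(S)$. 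Taking the case $\lambda=0$ trivially, this yields $T\perp^\varepsilon_{\omega_A}S$.

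For the converse $(1)\Rightarrow(2)$, fix $\alpha\in[0,2\pi)$ and apply the definition with $\lambda=\tfrac{e^{i\alpha}}{n}$: from $\omega_A^2(T+\tfrac{e^{i\alpha}}{n}S)\geq\omega_A^2(T)-\tfrac{2\varepsilon}{n}\omega_A(T)\omega_A(S)$ and the fact that $\omega_A$ is a supremum, for each $n$ pick $x_n^\alpha\in\mathbb{S}_\mathcal{H}^A$ with $\omega_A^2(T+\tfrac{e^{i\alpha}}{n}S)-\tfrac{1}{n^2}\leq|\langle(T+\tfrac{e^{i\alpha}}{n}S)x_n^\alpha,x_n^\alpha\rangle_A|^2$. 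Expanding the right-hand side via polarization gives
$$\omega_A^2(T)-\tfrac{2\varepsilon}{n}\omega_A(T)\omega_A(S)-\tfrac{1}{n^2}\leq|\langle Tx_n^\alpha,x_n^\alpha\rangle_A|^2+\tfrac2n\R\big(e^{-i\alpha}\langle Tx_n^\alpha,x_n^\alpha\rangle_A\langle x_n^\alpha,Sx_n^\alpha\rangle_A\big)+\tfrac{1}{n^2}|\langle Sx_n^\alpha,x_n^\alpha\rangle_A|^2.$$
Rearranging and using $|\langle Tx_n^\alpha,x_n^\alpha\rangle_A|\leq\omega_A(T)$, $|\langle Sx_n^\alpha,x_n^\alpha\rangle_A|\leq\omega_A(S)$ gives, after multiplying by $\tfrac n2$ and discarding the nonnegative jump $\tfrac n2(\omega_A^2(T)-|\langle Tx_n^\alpha,x_n^\alpha\rangle_A|^2)\geq 0$, the bound $0\leq\R\big(e^{-i\alpha}\langle Tx_n^\alpha,x_n^\alpha\rangle_A\langle x_n^\alpha,Sx_n^\alpha\rangle_A\big)+\tfrac{1}{2n}(\omega_A^2(S)+1)+\varepsilon\omega_A(T)\omega_A(S)$. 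Since $\{\langle Tx_n^\alpha,x_n^\alpha\rangle_A\langle x_n^\alpha,Sx_n^\alpha\rangle_A\}_n$ is bounded in $\mathbb{C}$, pass to a subsequence along which it converges to obtain (ii). For (i), feed the same displayed inequality back in: it shows $\omega_A^2(T)\geq|\langle Tx_{n_k}^\alpha,x_{n_k}^\alpha\rangle_A|^2\geq\omega_A^2(T)-O(1/n_k)$, so $|\langle Tx_{n_k}^\alpha,x_{n_k}^\alpha\rangle_A|\to\omega_A(T)$. The main obstacle is purely bookkeeping: making sure the error terms are controlled uniformly (which they are, thanks to boundedness of the $A$-numerical form by $\omega_A(T)$ and $\omega_A(S)$, both finite since $T,S\in\mathbb{B}_{A^{1/2}}(\mathcal{H})$) and that the two convergence claims are extracted along a common subsequence; conceptually there is no new difficulty beyond \Cref{apporthobh}.
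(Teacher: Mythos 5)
Your proposal is correct, and the adaptation of the proof of Theorem \ref{apporthobh} (replace $\|Rx\|_A^2$ by $|\langle Rx,x\rangle_A|^2$ and the cross term $\R(e^{-i\alpha}\langle Tx,Sx\rangle_A)$ by $\R(e^{-i\alpha}\langle Tx,x\rangle_A\langle x,Sx\rangle_A)$, which is what the expansion of $|\langle(T+\lambda S)x,x\rangle_A|^2$ produces) is exactly the standard route; all error terms are $O(1/n)$ since $\omega_A(T),\omega_A(S)<\infty$, and (i) holds along the full sequence, so the subsequence extracted for (ii) serves both. Be aware that the paper itself contains no proof of Theorem \ref{thms3} --- it is imported verbatim from \cite{sen} --- so the only in-paper comparison is with Theorem \ref{apporthobh}, whose argument you have faithfully and correctly transposed.
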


As an application of Theorem \ref{thms3}, we present the following result, which not only removes the additional assumption of $AT=TA$ made by the authors of \cite{sen}, but also provides a characterization, thereby enhancing Theorem 2.10 in \cite{sen}.

\begin{theorem}
Let $T,S\in \mathbb{B}_{A^{1/2}}(\mathcal{H})$ be such that $T\geq_A 0$. Then, the following assertions are equivalent:
\begin{itemize}
  \item [(i)] $T\perp^{\varepsilon}_{\omega_A}S$
  \item [(ii)] $(T+I)\perp^{\varepsilon}_{\omega_A}S$.
  \end{itemize}
\end{theorem}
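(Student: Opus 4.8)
The plan is to read both orthogonality relations through the sequential characterisation of Theorem~\ref{thms3} and to exploit that $A$-positivity of $T$ makes $\langle Tx,x\rangle_A$ a \emph{nonnegative real} number for every $x\in\mathcal{H}$. Two elementary facts do the work. First, for $x\in\mathbb{S}_{\mathcal{H}}^A$ we have $\langle (T+I)x,x\rangle_A=\langle Tx,x\rangle_A+1\ge 1$, so $|\langle (T+I)x,x\rangle_A|=\langle Tx,x\rangle_A+1=|\langle Tx,x\rangle_A|+1$; taking the supremum over $\mathbb{S}_{\mathcal{H}}^A$ gives $\omega_A(T+I)=\omega_A(T)+1$. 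Second, since $\langle Tx,x\rangle_A\in\mathbb{R}$, for each $\alpha\in[0,2\pi)$,
\[
\R\big(e^{-i\alpha}\langle (T+I)x,x\rangle_A\langle x,Sx\rangle_A\big)=\big(\langle Tx,x\rangle_A+1\big)\,\R\big(e^{-i\alpha}\langle x,Sx\rangle_A\big),
\]
while the $T$-quantity equals $\langle Tx,x\rangle_A\,\R(e^{-i\alpha}\langle x,Sx\rangle_A)$; thus the two weighted real parts occurring in condition (ii) of Theorem~\ref{thms3} for $T$ and for $T+I$ differ only through the positive scalar weights $\langle Tx,x\rangle_A$ and $\langle Tx,x\rangle_A+1$. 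I first dispose of the degenerate cases: if $\omega_A(S)=0$ then $\omega_A(\lambda S)=0$, so by the triangle inequality for $\omega_A$ the operators $T+\lambda S$ and $T$ (respectively $T+I+\lambda S$ and $T+I$) share the same $A$-numerical radius and both relations hold trivially; hence I may assume $\omega_A(S)>0$, and also $\omega_A(T)>0$ (if $\omega_A(T)=0$ then $\langle Tx,x\rangle_A=0$ for all $x$, forcing $AT=0$, i.e.\ $T$ is $A$-null, a case that must be set aside).

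For $(i)\Rightarrow(ii)$: fix $\alpha$. Applying Theorem~\ref{thms3} to $T\perp^{\varepsilon}_{\omega_A}S$ produces $\{x_n^{\alpha}\}\subseteq\mathbb{S}_{\mathcal{H}}^A$ with $|\langle Tx_n^{\alpha},x_n^{\alpha}\rangle_A|\to\omega_A(T)$ and $\lim_n\R(e^{-i\alpha}\langle Tx_n^{\alpha},x_n^{\alpha}\rangle_A\langle x_n^{\alpha},Sx_n^{\alpha}\rangle_A)\ge-\varepsilon\,\omega_A(T)\omega_A(S)$. Since $\langle Tx_n^{\alpha},x_n^{\alpha}\rangle_A\ge 0$, in fact $\langle Tx_n^{\alpha},x_n^{\alpha}\rangle_A\to\omega_A(T)$; as $\{\langle x_n^{\alpha},Sx_n^{\alpha}\rangle_A\}$ is bounded in $\mathbb{C}$ I pass to a subsequence with $\R(e^{-i\alpha}\langle x_n^{\alpha},Sx_n^{\alpha}\rangle_A)\to\ell$. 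Using the factorisation, $\omega_A(T)\,\ell\ge-\varepsilon\,\omega_A(T)\omega_A(S)$, so dividing by $\omega_A(T)>0$ gives $\ell\ge-\varepsilon\,\omega_A(S)$. I then verify condition (2) of Theorem~\ref{thms3} for the pair $(T+I,S)$ along the \emph{same} sequence: the first fact gives $|\langle (T+I)x_n^{\alpha},x_n^{\alpha}\rangle_A|\to\omega_A(T)+1=\omega_A(T+I)$, and the factorisation gives
\[
\lim_{n\to\infty}\R\big(e^{-i\alpha}\langle (T+I)x_n^{\alpha},x_n^{\alpha}\rangle_A\langle x_n^{\alpha},Sx_n^{\alpha}\rangle_A\big)=\big(\omega_A(T)+1\big)\ell\ \ge\ -\varepsilon\,\omega_A(T+I)\,\omega_A(S).
\]
As $\alpha$ was arbitrary, Theorem~\ref{thms3} yields $(T+I)\perp^{\varepsilon}_{\omega_A}S$.

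The direction $(ii)\Rightarrow(i)$ is the mirror image. Fix $\alpha$, take $\{y_n^{\alpha}\}\subseteq\mathbb{S}_{\mathcal{H}}^A$ from Theorem~\ref{thms3} applied to $(T+I)\perp^{\varepsilon}_{\omega_A}S$; from $|\langle (T+I)y_n^{\alpha},y_n^{\alpha}\rangle_A|=\langle Ty_n^{\alpha},y_n^{\alpha}\rangle_A+1\to\omega_A(T+I)=\omega_A(T)+1$ we get $\langle Ty_n^{\alpha},y_n^{\alpha}\rangle_A\to\omega_A(T)$, i.e.\ condition (i) of Theorem~\ref{thms3} holds for $T$ along $\{y_n^{\alpha}\}$. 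Passing to a subsequence with $\R(e^{-i\alpha}\langle y_n^{\alpha},Sy_n^{\alpha}\rangle_A)\to\ell'$, the hypothesis and the factorisation give $(\omega_A(T)+1)\ell'\ge-\varepsilon(\omega_A(T)+1)\omega_A(S)$, hence $\ell'\ge-\varepsilon\,\omega_A(S)$, and therefore $\omega_A(T)\,\ell'\ge-\varepsilon\,\omega_A(T)\omega_A(S)$, which is condition (ii) of Theorem~\ref{thms3} for $T$. Thus $T\perp^{\varepsilon}_{\omega_A}S$.

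The only non-routine step — the place I expect the real work — is transferring condition (ii) of Theorem~\ref{thms3} between $T$ and $T+I$. It hinges entirely on $T\geq_A 0$: only because $\langle Tx,x\rangle_A$ is real and $\ge 0$ can one pull the scalar weight out of $\R(e^{-i\alpha}\,\cdot\,)$, identify $\langle Tx_n^{\alpha},x_n^{\alpha}\rangle_A\to\omega_A(T)$ rather than merely its modulus, and then divide the limiting inequality by $\omega_A(T)$ (forward) or by $\omega_A(T)+1$ (reverse). The division by $\omega_A(T)$ is legitimate precisely when $\omega_A(T)\neq 0$, which is why the $A$-null case $AT=0$ must be excluded; the remaining bookkeeping — boundedness of the numerical-range sequences, extraction of a common convergent subsequence, and the trivial case $\omega_A(S)=0$ — is routine.
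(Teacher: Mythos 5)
Your proof is correct and takes essentially the same route as the paper's: both directions pass through Theorem \ref{thms3}, use $\omega_A(T+I)=\omega_A(T)+1$ together with the factorisation $\R\big(e^{-i\alpha}\langle Tx,x\rangle_A\langle x,Sx\rangle_A\big)=\langle Tx,x\rangle_A\,\R\big(e^{-i\alpha}\langle x,Sx\rangle_A\big)$ supplied by $T\geq_A 0$, and extract convergent subsequences of the bounded scalar sequences. If anything you are slightly more careful than the paper, which divides by $\omega_A(T)$ silently in the forward direction; the degenerate case $\omega_A(T)=0$ (i.e.\ $AT=0$) that you flag and set aside is genuinely problematic, since there (i) holds vacuously while (ii) reduces to $I\perp^{\varepsilon}_{\omega_A}S$, which can fail.
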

\begin{proof}
``$(i)\Rightarrow(ii)$'' Let $\alpha \in [0, 2\pi)$. Since $T\perp^{\varepsilon}_{\omega_A}S$, then by Theorem \ref{thms3}, there exists $\{x_k^{\alpha}\}$ in $\mathbb{S}_\mathcal{H}^A$ such that $\displaystyle\lim_{k\to \infty} |\langle Tx^{\alpha}_k, x^{\alpha}_k\rangle_A| =\omega_A(T)$ and
\begin{equation}\label{fr1}
\displaystyle\lim_{k\to \infty} \R\Big(e^{-i\alpha} \langle Tx^{\alpha}_k, x^{\alpha}_k\rangle_A\langle x^{\alpha}_k, Sx^{\alpha}_k\rangle_A\Big)\geq -\epsilon \omega_A(T)\omega_A(S).
\end{equation}
So, by using the fact that $T\geq_A0$, we deduce that
$$\displaystyle\lim_{k\to \infty} \big|\langle Tx^{\alpha}_k, x^{\alpha}_k\rangle_A\big|\R\Big(e^{-i\alpha} \langle x^{\alpha}_k, Sx^{\alpha}_k\rangle_A\Big)\geq -\epsilon \omega_A(T)\omega_A(S),$$
whence
\[\omega_A(T)\lim	_{k\to \infty}\R\big(e^{-i\alpha}\langle x^{\alpha}_k, Sx^{\alpha}_k\rangle_A\big)
\geq -\varepsilon \omega_A(T)\omega_A(S).\]
 This yields that
 \begin{equation}\label{fr2}
\lim	_{k\to \infty}\R\big(e^{-i\alpha}\langle x^{\alpha}_k, Sx^{\alpha}_k\rangle_A\big)
\geq -\varepsilon \omega_A(S).
\end{equation}
Moreover, since $T\geq_A0$, then it can be seen that $\omega_A(T+I)=\omega_A(T)+1$. So, by taking \eqref{fr1} and \eqref{fr2} into consideration, one observes that
\begin{align*}
&\lim_{k\to\infty} \R\big(e^{-i\alpha}\langle (T+I) x_k^{\alpha},x_k^{\alpha}\rangle_A\langle x_k^{\alpha},Sx_k^{\alpha}\rangle_A\big)\\
&=\lim_{k\to\infty}\R\big(e^{-i\alpha}\langle Tx_k^{\alpha},x_k^{\alpha}\rangle_A \langle x^{\alpha}_k, Sx^{\alpha}_k\rangle_A\big)+\lim_{k\to\infty} \R\big(e^{-i\alpha}\|x_k^{\alpha}\|_A^2\langle x_k^{\alpha},Sx_k^{\alpha}\rangle_A\big)\\
&\geq -\varepsilon \omega_A(T) \omega_A(S)-\varepsilon \omega_A(S)\\
&= -\varepsilon\omega_A(S)\big(\omega_A(T)+1\big)\geq -\varepsilon\omega_A(S)\omega_A(T+I).
\end{align*}
On the other hand, we have
\begin{align*}
\lim_{k\to \infty}\big|\langle (T+I) x_k^{\alpha}, x_k^{\alpha}\rangle_A\big|^2
&=\lim_{k\to \infty}\left(\big|\langle T x_k^{\alpha},x_k^{\alpha}\rangle_A\big|^2+\|x_k^{\alpha}\|_A^2+2\langle Tx^{\alpha}_k, x^{\alpha}_k\rangle_A\right)\\
&=\omega_A^2(T)+1+2\omega_A(T)=\omega_A^2(T+I).
\end{align*}
Thus, we infer that $(T+I)\perp ^\varepsilon_{\omega_A} S$ as required.

``$(ii)\Rightarrow(i)$'' Let $\alpha \in [0, 2\pi)$. Since $(T+I)\perp^{\varepsilon}_{\omega_A}S$, then by Theorem \ref{thms3}, there exists $\{x_k^{\alpha}\}$ in $\mathbb{S}_\mathcal{H}^A$ such that
\begin{equation}\label{fr3}
\displaystyle\lim_{k\to \infty} |\langle (T+I)x^{\alpha}_k, x^{\alpha}_k\rangle_A| =\omega_A(T+I),
\end{equation}
and
\begin{equation}\label{fr4}
\displaystyle\lim_{k\to \infty} \R\Big(e^{-i\alpha} \langle (T+I)x^{\alpha}_k, x^{\alpha}_k\rangle_A\langle x^{\alpha}_k, Sx^{\alpha}_k\rangle_A\Big)\geq -\epsilon \omega_A(T+I)\omega_A(S).
\end{equation}
Since $T\geq_A 0$, then $\omega_A(T+I)=\omega_A(T)+1$. So, it follows from \eqref{fr3} that $\displaystyle\lim_{k\to \infty} |\langle Tx^{\alpha}_k, x^{\alpha}_k\rangle_A| =\omega_A(T)$. Moreover, it follows from \eqref{fr4} that
\begin{equation*}
\displaystyle\lim_{k\to \infty} \R\Big(e^{-i\alpha} \big(\langle Tx^{\alpha}_k, x^{\alpha}_k\rangle_A+1\big)\langle x^{\alpha}_k, Sx^{\alpha}_k\rangle_A\Big)\geq -\epsilon \big(\omega_A(T)+1\big)\omega_A(S),
\end{equation*}
whence
\begin{equation}\label{5ra}
\displaystyle\lim_{k\to \infty}\left[\big(\langle Tx^{\alpha}_k, x^{\alpha}_k\rangle_A+1\big) \R\Big(e^{-i\alpha} \langle x^{\alpha}_k, Sx^{\alpha}_k\rangle_A\Big)\right]\geq -\epsilon \big(\omega_A(T)+1\big)\omega_A(S),
\end{equation}
 Since \{$\R\Big(e^{-i\alpha} \langle x^{\alpha}_k, Sx^{\alpha}_k\rangle_A\Big)\}$ is a bounded sequence in $\mathbb{C}$, there exists a subsequence $\{y_{k_j}^{\alpha}\}$ of $\{x_k^{\alpha}\}$ such that
$$\displaystyle\lim_{j\to\infty}\R\Big(e^{-i\alpha} \langle y_{k_j}^{\alpha}, Sy_{k_j}^{\alpha}\rangle_A\Big)=M<\infty.$$
So, by using \eqref{5ra}, we deduce that
\begin{equation*}\label{5ra2}
(\omega_A(T)+1\big)M\geq -\epsilon \big(\omega_A(T)+1\big)\omega_A(S),
\end{equation*}
This yields that $M\geq -\epsilon \omega_A(S)$. Now, we see that
\begin{align*}
\displaystyle\lim_{j\to \infty} \R\Big(e^{-i\alpha} \langle Ty_{k_j}, y_{k_j}\rangle_A\langle y_{k_j}, Sy_{k_j}\rangle_A\Big)
&=\displaystyle\lim_{j\to \infty} \langle Ty_{k_j}, y_{k_j}\rangle_A\R\Big(e^{-i\alpha} \langle y_{k_j}, Sy_{k_j}\rangle_A\Big)\\
&=\omega_A(T)\cdot M\geq -\epsilon \omega_A(T)\omega_A(S).
\end{align*}
This proves that $T\perp^{\varepsilon}_{\omega_A}S$.
\end{proof}

The following example shows that the $A$-positivity condition is required in the last Theorem.
	\begin{example}
		Suppose that $A=I,$ $T=\begin{pmatrix}
		-1 & 1\\
		0 & -2
		\end{pmatrix}$ and $S=\begin{pmatrix}
		\frac{1}{1000} & 0\\
		0 & 0
		\end{pmatrix}$ are in $\mathbb{M}_2(\mathbb{C}).$  Clearly, $T$ is not a positive operator, since $\langle Te_1, e_1\rangle=-1<0.$\\
		Also, we have $|\langle Te_1, e_1\rangle|=1$ and for any $\alpha\in [0, 2\pi)$, we get
		\begin{align*}
		\R\big(e^{-i\alpha}\langle Te_1,e_1\rangle \langle e_1, Se_1\rangle\big)
		&=	\R\Big(e^{-i\alpha}\frac{-1}{1000}\Big)\\
		&=\frac{-1}{1000}\R\big(e^{-i\alpha}\big)=\frac{-1}{1000}\cos(\alpha)\\
		&\geq \frac{-1}{1000}\geq -\varepsilon,\
		\end{align*}	
		for each $\varepsilon \in [\frac{1}{1000}, 1).$ Therefore, due to the homogeneity of the $\perp^{\varepsilon}_{\omega_A}$, we conclude that $T\perp^{\varepsilon}_{\omega} S$ for any $\varepsilon \in [\frac{1}{1000}, 1)
		.$\\
		On the other hand, for any $\lambda \in \mathbb{C}$ we get
		\[T+I+\lambda S=\begin{pmatrix}
		\frac{\lambda}{1000} & 1\\
		0 & -1
		\end{pmatrix}.\]
		In particular, for $\lambda=1000$, we obtain $\omega\Bigg(\begin{pmatrix}
		\frac{\lambda}{1000} & 1\\
		0 & -1
		\end{pmatrix}\Bigg)=\frac{\sqrt{5}}{2}$ and $\omega(T+I)=\omega\Bigg(\begin{pmatrix}
		0 & 1\\
		0 & -1
		\end{pmatrix}\Bigg)=\frac{1+\sqrt{2}}{2}.$\\
		 Then, if we consider $\varepsilon_0=\frac{1}{100}\in [\frac{1}{1000}, 1)$, we reach
		\[ 1.24\approx \omega^2(T+I+1000 S)<\omega^2(T+I)-2\varepsilon_0\omega(T+I)\omega(1000 S)\approx 1.43,\]
		showing that $ (T+I)\not\perp^{\varepsilon_0}_{\omega}  S  $ where $\varepsilon_0=\frac{1}{100}.$	
\end{example}

\section*{Declarations}
\noindent{\bf{Funding}}\\
\noindent No applicable.

\vspace{0.5cm}

\noindent{\bf{Availability of data and materials}}\\
\noindent No data were used to support this study.
\vspace{0.5cm}\\
\noindent{\bf{Competing interests}}\\
\noindent The authors declare that they have no competing interests.
\vspace{0.5cm}

\noindent{\bf{Author contribution}}\\
\noindent The work presented here was carried out in collaboration between all authors. All authors contributed equally and significantly in writing this article. All authors have contributed to the manuscript. All authors have read and agreed to the published version of the manuscript.

\vspace{0.5cm}

\noindent{\bf Acknowledgment:} The authors would like to express their gratitude to Saikat Roy from the Indian Institute of Technology Bombay for his valuable assistance in the preparation of this work. Additionally, some of the results presented in this paper have been privately discussed with him.

\bibliographystyle{amsplain}

\end{document}